\def\adic{\textrm{-adic}}
\def\GQ{\Gal_\QQ}
\def\GQp{\Gal_{\QQ_p}}
\def\ve{\varepsilon}
\def\rank{\mathrm{rank}}
\newtheorem*{theorem*}{Theorem~\ref{thm2}}
\newtheorem{thm}{Theorem}[section]
\newtheorem{theorem}[thm]{Theorem}
\newtheorem{lemma}[thm]{Lemma}
\newtheorem{corollary}[thm]{Corollary}
\newtheorem{conjecture}[thm]{Conjecture}
\newtheorem{proposition}[thm]{Proposition}
\theoremstyle{definition}
\newtheorem{definition}[thm]{Definition}
\newtheorem{notation-definition}[thm]{Notation-Definition}
\newtheorem{statement}[thm]{Statement}
\newtheorem{remark}[thm]{Remark}
\newtheorem{notation}[thm]{Notation}
\numberwithin{equation}{section}
\def\olr{\bar{\rho}}
\def\tk{\tilde{k}}
\def\omin{\mathrm{min}}
\def\calK{\mathcal{K}}
\def\calO{\mathcal{O}}
\def\calS{\mathcal{S}}
\def\CC{\mathbb{C}}
\def\FF{\mathbb{F}}
\def\QQ{\mathbb{Q}}
\def\RR{\mathbb{R}}
\def\ZZ{\mathbb{Z}}
\def\bfm{\mathbf{m}}
\def\bT{\mathbf{T}}
\def\rmK{\mathrm{K}}
\def\tH{\widetilde{\mathrm{H}}}
\def\Or{\bar{r}}
\def\OD{{\Delta}}
\def\kob{{k_{0\bullet}^\omin}}
\def\ko{{k_0^\omin}}
\def\kO{k_0^{\mathrm{max}}}
\def\NP{\mathrm{NP}}
\def\Dig{\mathrm{\Dig}}
\def\v{v_p}
\def\nS{\mathrm{nS}}
\DeclareMathOperator{\GL}{GL}
\DeclareMathOperator{\Hom}{Hom}
\DeclareMathOperator{\Gal}{Gal}
\newcommand{\ur}{\mathrm{ur}}
\newcommand{\Iw}{\mathrm{Iw}}
\newcommand{\new}{\mathrm{new}}
\begin{document}
	\title{The localized Gouvêa--Mazur conjecture}

	\author{Rufei Ren}
	\address{Department of Mathematics 
		Fudan University}
	\email{rufeir@fudan.edu.cn}
	\date{\today}
	\begin{abstract}
%
Gouvêa--Mazur \cite{gm} made a conjecture on the local constancy of slopes of modular forms when the weight varies $p$-adically. Since one may decompose the space of modular forms according to associated residual Galois representations, the Gouvêa--Mazur conjecture makes sense for each such component.  We prove the localized Gouvêa--Mazur conjecture when the residual Galois representation is irreducible and its restriction to $\Gal(\bar\QQ_p/\QQ_p)$ is reducible and  generic.
	\end{abstract}

	\subjclass[2010]{}
	\keywords{Gouvêa--Mazur conjecture, Modular forms, Ghost conjecture}
	\maketitle
	
	\setcounter{tocdepth}{1}
	\tableofcontents

\section{Introduction}

Let $p$ be a prime number, $N$ be a positive integer coprime to $p$. We normalize the $p$-adic valuation of elements in $\bar\QQ_p$ so that $v_{p}(p)=1$.
For any $r\in \QQ$ and a space $S$ of modular forms with $U_p$-action, we write $d(S, r)$ for the cardinality of the number of generalized eigenvalues of the $U_p$-action with $p$-adic valuation $r$. 
Let $S_{k}(\Gamma_0(Np))$ be the space of cusp forms with weight $k$ and level $Np$.
\begin{statement}\label{state:1}
	There is a polynomial $M(x)\in \QQ[x]$ such that  
	for any $r\in \QQ_{\geq 0}$, if $k_{1}$ and $k_{2}$ are integers such that
\begin{enumerate}
	\item 	$k_{1}\geq 2r+2$ and $k_{2}\geq 2r+2$;
	\item  $k_{1} \equiv k_{2} \pmod {(p-1)p^{m}}$ for an integer $m\geq M(r)$,
\end{enumerate}
	then $d\left(S_{k_1}(\Gamma_0(Np)), r\right)=d\left(S_{k_2}(\Gamma_0(Np)), r\right)$.
\end{statement}

Based on the numerical data from Mestre, Gouvêa and Mazur made the following conjecture in \cite{gm}.
\begin{conjecture}[Gouvêa--Mazur Conjecture]\label{conj}
 Statement~\ref{state:1} is true with $M(x)=x$.
\end{conjecture}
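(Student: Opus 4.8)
The plan is to derive Conjecture~\ref{conj} from the Bergdall--Pollack ghost conjecture. Fix the tame level $\Gamma_0(N)$. For each weight $k$ the space $S_k(\Gamma_0(Np))$ decomposes, compatibly with the $U_p$-action, along the semisimple residual Galois representations $\bar\rho\colon G_\QQ\to\GL_2(\bar\FF_p)$ unramified outside $Np$, so that $d\big(S_k(\Gamma_0(Np)),r\big)=\sum_{\bar\rho} d\big(S_k(\Gamma_0(Np))_{\bar\rho},r\big)$; it suffices to prove the local-constancy statement, with a uniform polynomial, for each $\bar\rho$-isotypic piece. Fix $\bar\rho$ and a connected component of weight space $\calW$; a classical weight $k$ on that component corresponds to a point $w_k\in\calW$, and $v_p(w_{k_1}-w_{k_2})\geq m$ whenever $k_1\equiv k_2\pmod{(p-1)p^m}$ lie on it. The ghost conjecture attaches to $(\bar\rho,\calW)$ an explicit \emph{ghost series} $G_{\bar\rho}(w,t)=\sum_{i\geq 0} g_i(w)\,t^i$, with each $g_i(w)=\prod_{k'}(w-w_{k'})^{m_i(k')}$ a polynomial whose zeros $w_{k'}$ and multiplicities $m_i(k')$ are given by an explicit combinatorial recipe in terms of dimensions of spaces of newforms, and predicts that the Newton polygon of the characteristic power series of $U_p$ on overconvergent $\bar\rho$-forms of weight $w$ agrees with $\NP\big(G_{\bar\rho}(w,-)\big)$.

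\textbf{From the ghost polygon to the slope count.} Granting the ghost conjecture for $\bar\rho$, fix $r\in\QQ_{\geq 0}$ and a classical weight $k\geq 2r+2$ on the component. Coleman's classicality theorem (together with the Atkin--Lehner symmetry at $p$) shows that the $U_p$-eigenvalues of slope $\leq r$ on the overconvergent space are exactly those on the classical space $S_k(\Gamma_0(Np))_{\bar\rho}$, so $d\big(S_k(\Gamma_0(Np))_{\bar\rho},r\big)$ is read off from the truncation of $\NP\big(G_{\bar\rho}(w_k,-)\big)$ to slopes $\leq r$. That truncation involves only the coefficients $g_i$ with $i\leq D(r)$ for a bound $D(r)$ depending on $r$ alone, because the ghost Newton polygon is uniformly convex (the valuations $v_p(g_i(w_k))$ grow superlinearly in $i$, uniformly in $k$). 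Hence the conjecture reduces to: for each $i\leq D(r)$, $v_p\big(g_i(w_{k_1})\big)=v_p\big(g_i(w_{k_2})\big)$ whenever $k_1\equiv k_2\pmod{(p-1)p^m}$ with $m\geq r$ and $k_1,k_2\geq 2r+2$. Writing $v_p(g_i(w_{k_1}))=\sum_{k'} m_i(k')\,v_p(w_{k_1}-w_{k'})$, the terms with $v_p(w_{k_1}-w_{k'})<m$ are unchanged upon replacing $k_1$ by $k_2$ (since $v_p(w_{k_1}-w_{k_2})\geq m$), while the remaining terms --- those from ghost zeros $w_{k'}$ that are $p$-adically within $p^{-m}$ of $w_{k_1}$ --- are precisely the ones one must show are irrelevant to slopes $\leq r$. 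The explicit formula for $m_i(k')$ and the Bergdall--Pollack estimates locating where $\NP(G_{\bar\rho})$ reaches height $r$ pin down how large $m$ must be; the arithmetic should yield exactly $m\geq r$, i.e. $M(x)=x$, matching the conjecture.

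\textbf{The main obstacle.} The serious difficulty is that the ghost conjecture is not available in full generality: it is proven exactly when $\bar\rho$ is irreducible with $\bar\rho|_{G_{\QQ_p}}$ reducible and generic --- the very hypothesis of the localized theorem of this paper --- and in those cases it indeed delivers the sharp bound. For the other residual representations --- $\bar\rho$ globally reducible, or $\bar\rho|_{G_{\QQ_p}}$ irreducible, or non-generic reducible --- the local factor at $p$ has a different shape, the ghost series has to be modified (to accommodate the ``halo'' near the boundary of $\calW$, companion-form degeneracies, or Eisenstein contributions), and in several of these cases no clean combinatorial model is presently known, so even the conditional reduction above has no input. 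A second, finer obstacle is sharpness at the edge of the slope range: near slope exactly $r$, or for weights just above the classicality threshold $k=2r+2$, one must exclude a slope-$r$ eigenvalue that appears for $k_1$ but not for $k_2$, which requires the ghost estimates to be exact rather than asymptotic. Thus the route from the localized result proven here to the full Gouvêa--Mazur conjecture with $M(x)=x$ passes through a uniform-in-$\bar\rho$ form of the ghost conjecture (or a sufficiently sharp substitute), which remains open.
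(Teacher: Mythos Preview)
The statement you are attempting to prove is a \emph{conjecture}, not a theorem; the paper does not prove it and in fact immediately records that Buzzard and Calegari have disproved it by an explicit counterexample. So any complete proof strategy must fail, and the relevant question is where yours does.

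The genuine gap is your assertion that ``the arithmetic should yield exactly $m\geq r$, i.e.\ $M(x)=x$.'' This is not borne out even in the regime where the ghost conjecture is known. The paper carries out precisely the program you sketch in the generic reducible case---localize at $\bar r$, invoke the proven local ghost conjecture, and compare ghost Newton polygons at nearby weights---and the careful estimates (Proposition~\ref{lemma2}, Proposition~\ref{prop1} and its two proofs) only deliver $M(x)=x+5$, not $M(x)=x$; the discrepancy comes from the terms you wave away as ``irrelevant to slopes $\leq r$,'' namely the ghost zeros $w_{k'}$ that are $p$-adically very close to $w_{k_1}$. Controlling their effect on the low-slope part of the polygon requires the near-Steinberg analysis of \S\ref{s3}--\S\ref{s4} and leaves an additive constant you cannot remove. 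Combined with the Buzzard--Calegari example (which shows the true additive shift is at least $1$), this means the sharp claim $M(x)=x$ is simply false, and your sketch does not isolate where the extra constant enters. Your second paragraph of obstacles is well observed, but the decisive obstruction is not the unavailability of the ghost conjecture for other $\bar\rho$; it is that the conjecture you are trying to prove is wrong.
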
 
In \cite{Buz}, Buzzard and Calegari disproved this conjecture by giving an explicit counterexample.
However, it is still believed to be true that $M$ can be taken to be a
linear polynomial. So far, the only definite result 
concerning Statement~\ref{state:1} is due to \cite{wan}, in which Wan proved Statement~\ref{state:1} with a quadratic $M$. Recently, Hattori proved in \cite{Hat} an analogue of Gouvêa--Mazur conjecture for 
Drinfeld cuspforms of level $\Gamma_1(t)$. 

 In this paper, we prove that if we localize $S_{k}(\Gamma_0(Np))$ at some maximal
Hecke ideal associated to a Galois representation	$\bar r:\GQ:=\Gal\left(\bar{\QQ} / \QQ\right) \rightarrow \GL_{2}(\bar\FF_p)$ (under certain conditions on $\bar r$), Statement~\ref{state:1} is true  with $M(x)=x+5$ (see Theorem~\ref{thm1}). Our work
 is mainly based on \cite{xiao} and \cite{xiao1} from Liu, Truong, Xiao and Zhao, in which they solved the so-called local ghost conjecture from \cite{BP}. To present our main theorem, we introduce some notations first.

Let $E$ be a finite field extension of $\QQ_p$ that contains $\sqrt p$, and denote by  $\calO$ and $\FF$ its
ring of integers and residue field, respectively. Let $\Delta :=(\mathbb{Z} / p \mathbb{Z})^{\times}$.
For any  character $\psi:\Delta \to \bar\QQ^\times$ and any  Galois representation	$\bar r:\GQ \rightarrow \GL_{2}(\FF)$, we denote by
$S_{k}\left(\Gamma_0(Np),\psi\right)_{\mathfrak{m}_{\bar r}}$ the localization of $S_{k}\left(\Gamma_0(Np),\psi\right)$ at the maximal Hecke ideal $\mathfrak{m}_{\bar r}$ associated to $\bar r$. 
Since $S_{k}\left(\Gamma_0(Np),\psi\right)$ has finite dimension, it can be decomposed  into a direct sum $\bigoplus_{\bar r} S_{k}(\Gamma_0(Np), \psi)_{\mathfrak{m}_{\bar r}}$ over a finite set of (semisimple) residual Galois representations $\bar r:\GQ \rightarrow \GL_{2}\left(\FF\right)$. 
Therefore,  it makes sense to study a similar question as Conjecture~\ref{conj} about each $S_{k}\left(\Gamma_0(Np), \psi\right)_{\mathfrak{m}_{\bar r}}$. In  consideration of the following two remarks, we prove this result in Theorem~\ref{thm1} under some certain assumptions. \begin{enumerate}
	\item The proof of our main theorem relies heavily on the local ghost conjecture, whose key requirements are that $\bar{r}$ is absolutely irreducible, and $	\Or|_{\mathrm{I}_{\QQ_p}} $ is reducible and generic, i.e. there is a unique pair of integers $(a, b)$ such that 
	\begin{equation}\label{eq:1}
		\Or|_{\mathrm{I}_{\QQ_p}} \simeq\begin{bmatrix}
			\omega_{1}^{a+b+1} & * \\
			0 & \omega_{1}^{b}
		\end{bmatrix} \quad \text { for some~} 2 \leq a \leq p-5 \text { and } 0 \leq b \leq p-2,	
	\end{equation}
	where  $I_{\QQ_p}$ and 	$\omega_{1}$ are the inertial group and the first fundamental character of $\GQp$, respectively. 
	\item For $\bar r$ in (1), we have $$S_{k}\left(\Gamma_0(Np),\psi\right)_{\mathfrak{m}_{\bar r}}\neq \emptyset\ \textrm{if and only if}\ \psi(\bar\alpha)=\omega(\bar\alpha)^{a+2b-k+2}\ \textrm{for every}\ \bar\alpha\in \Delta.$$
\end{enumerate}
\begin{theorem}[Main theorem]\label{thm1}
Assume that $p\geq 11$. 
Let   $\bar r: \GQ \rightarrow \GL_{2}(\mathbb{F})$ be an absolutely irreducible residual Galois representation  such that 
	$	\Or|_{\mathrm{I}_{\QQ_p}} $
	is reducible and generic, i.e. $2 \leq a \leq p-5$; and $\psi:(\ZZ/p\ZZ)^\times \to \bar\QQ^\times$ be  a character.

	For any integers $m\geq 4$ and $k_1, k_2$ such that 
%
\begin{enumerate}
	\item $k_{1}> m-3$ and $k_2> m-3$,
	\item  $k_{1} \equiv k_{2} \pmod {(p-1)p^{m}}$, 
		\item 	 $\psi(\bar\alpha)=\omega(\bar\alpha)^{a+2b-k_1+2}=\omega(\bar\alpha)^{a+2b-k_2+2}$ for every $\bar\alpha\in \Delta$,
\end{enumerate}
 we have $d\left(S_{k_{1}}(\Gamma_0(Np),\psi)_{\mathfrak{m}_{\bar r}}, m-4\right)=d\left(S_{k_{2}}(\Gamma_0(Np),\psi)_{\mathfrak{m}_{\bar r}}, m-4\right)$.
\end{theorem}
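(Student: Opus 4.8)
The plan is to transfer the entire problem to the statistics of the associated (local) ghost series, and then use $p$-adic continuity of the ghost coefficients in the weight variable. Concretely, under the genericity hypothesis the work of Liu--Truong--Xiao--Zhao gives an equality between the Newton polygon of the $U_p$-action on $S_{k}(\Gamma_0(Np),\psi)_{\mathfrak{m}_{\bar r}}$ and the Newton polygon of an explicit ghost series $G_{\bar r}(w,t)$, where $w$ is a $p$-adic weight parameter; so $d\left(S_{k}(\Gamma_0(Np),\psi)_{\mathfrak{m}_{\bar r}}, r\right)$ equals the horizontal length of the slope-$r$ segment of $\mathrm{NP}(G_{\bar r}(w_k,-))$, which is a purely combinatorial quantity determined by the valuations $v_p(g_n(w_k))$ of the coefficients. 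The first step is therefore to recall this ghost-Newton-polygon comparison precisely and to reduce Theorem~\ref{thm1} to the following assertion: if $k_1\equiv k_2 \pmod{(p-1)p^m}$ (so $w_{k_1}\equiv w_{k_2}$ to precision $p^m$ in the weight disk), then the slope-$(m-4)$ parts of the two ghost Newton polygons have the same length.

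The second step is to make the continuity quantitative. Each ghost coefficient $g_n(w)$ is (up to a unit and a known power of a linear factor) a product of terms of the form $(w - w_{k'})$ over an explicitly described multiset of ``zeros'' $k'$ dictated by the ghost combinatorics; hence $v_p\big(g_n(w_{k_1})\big)$ and $v_p\big(g_n(w_{k_2})\big)$ can only differ because some zero $w_{k'}$ is $p$-adically very close to $w_{k_1}$ (within $p^{-m}$) but then it is equally close to $w_{k_2}$. The key elementary fact is that for the weight parameter $w_k = (1+p)^{k}-1$ (or the normalization used in the cited papers), $v_p(w_{k_1}-w_{k_2}) = m+1$ exactly when $k_1\equiv k_2\pmod{(p-1)p^m}$ but not mod $p^{m+1}$, and more importantly that $v_p(w_{k_1}-w_{k'}) = v_p(w_{k_2}-w_{k'})$ whenever that common value is $\le m$. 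Consequently $v_p\big(g_n(w_{k_1})\big)$ and $v_p\big(g_n(w_{k_2})\big)$ agree whenever both are $\le$ some bound of size roughly $m$, and when they disagree both are already $> m - O(1)$. Choosing the constant $5$ in $M(x)=x+5$ (equivalently working with slope $m-4$ and the hypotheses $k_i > m-3$, $m\ge 4$) is exactly the bookkeeping that makes this margin work: it guarantees that the portion of each Newton polygon of slope $\le m-4$ is built only from coefficients whose valuations are pinned down identically for $k_1$ and $k_2$, and that the two polygons share the same vertex at the abscissa where slope $m-4$ is attained.

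The third step is to run the Newton-polygon comparison itself. Having shown $v_p(g_n(w_{k_1})) = v_p(g_n(w_{k_2}))$ for all $n$ in the relevant range, and having controlled the ``dimension'' function $d_{k}^{\new}$ and the Atkin--Lehner/oldform decomposition so that the two ghost series have the same shape in that range (this uses hypothesis (3), which fixes the nebentypus compatibly, and the formula for when $S_k(\Gamma_0(Np),\psi)_{\mathfrak{m}_{\bar r}}\ne 0$), one concludes that the lower-left parts of $\mathrm{NP}(G_{\bar r}(w_{k_1},-))$ and $\mathrm{NP}(G_{\bar r}(w_{k_2},-))$ up through slope $m-4$ coincide, including endpoints; hence the number of slope-$(m-4)$ sides of equal length, and in particular the multiplicity $d(\cdot, m-4)$, matches. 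Translating back through the ghost theorem gives $d\left(S_{k_1}(\Gamma_0(Np),\psi)_{\mathfrak{m}_{\bar r}}, m-4\right) = d\left(S_{k_2}(\Gamma_0(Np),\psi)_{\mathfrak{m}_{\bar r}}, m-4\right)$.

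The main obstacle I anticipate is the second step done honestly: one must show not merely that nearby coefficients are close, but that the \emph{entire} low-slope part of the Newton polygon is determined — vertices and all — by data that is locally constant in the weight, and that no ``accidental'' vertex of slope just below $m-4$ can appear for one weight and not the other. This requires the fine structure of the ghost series from \cite{xiao,xiao1} — in particular the near-Steinberg / halo estimates that force the Newton polygon to have controlled vertices — together with a careful choice of the additive constant (here $5$) so that the slope window $[0, m-4]$ sits strictly inside the ``provably locally constant'' regime. The remaining inputs (the ghost theorem, the nebentypus criterion, basic $p$-adic estimates on $w_k$) are quoted or routine.
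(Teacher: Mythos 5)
Your reduction step matches the paper's: identify the localized classical space with low-slope part of the abstract $p$-adic forms attached to completed homology, invoke the local ghost theorem of \cite{xiao1} to replace its $U_p$-Newton polygon by $\NP\big(G_{\olr}^{(\ve)}(w_k,-)\big)$, and thereby reduce to showing the slope-$\leq m-4$ parts of the two ghost Newton polygons agree (this is exactly Theorem~\ref{thm2}). But the whole substance of the paper lies in proving that last statement, and this is precisely where your proposal stops: you name it as ``the main obstacle I anticipate'' and defer it to ``the fine structure of the ghost series.'' Moreover, the mechanism you sketch for it is not correct as stated. It is not true that $v_p(g_n(w_{k_1}))$ and $v_p(g_n(w_{k_2}))$ can differ only when both are already of size roughly $m$: the coefficients differ exactly through ghost zeros $w_{k'}$ with $v_p(k'-k_1)\geq m$, i.e.\ $k'\in\calS$, and the smallest such weight $k_0^{\min}$ can be a genuinely small weight (anything in $[2,2+(p-1)p^m)$), whose exponents $m_n^{(\ve)}(k_0^{\min})$ are supported at small indices $n\in(d_{k_0^{\min}}^{\ur},d_{k_0^{\min}}^{\Iw}-d_{k_0^{\min}}^{\ur})$. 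So low-index, low-valuation coefficients are \emph{not} ``pinned down identically'' for $k_1$ and $k_2$, and the slope window $[0,m-4]$ does not sit automatically inside a locally constant regime; the case $\frac{k_0^{\min}-2}{2}<m-4$ has to be confronted head-on.

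What the paper actually does, and what is missing from your proposal, is the following. Using the near-Steinberg formalism (Proposition~\ref{xiao:pro}), it chooses a vertex abscissa $n_0$ ($d_{k_0^{\min}}^{\ur}$ or $d_{k_0^{\max}}^{\ur}$ according to whether $\frac{k_0^{\min}-2}{2}\geq m-4$ or not), proves both polygons agree up to $n_0$, and proves every segment to the right of $n_0$ has slope $\geq m-4$. In the second case it must show that the whole near-Steinberg segment of $k_0^{\min}$, of slope exactly $\frac{k_0^{\min}-2}{2}$, occurs identically in both polygons (Lemma~\ref{lem:2}), which requires the estimate $\frac{k-2}{2}\geq \OD_{k,\frac12 d_k^{\new}}-\OD_{k,\frac12 d_k^{\new}-1}-4$ of Proposition~\ref{lemma2}; and it must prove Proposition~\ref{prop1}, bounding the slope of any segment starting at $n_0$ from below by $\min\{m-4,\frac{k_0-2}{2}\}$ via explicit sums of $p$-adic valuations over ranges of weights (Lemma~\ref{lem:6}), a case split $L=1$ versus $L\geq 2$, and numerical bounds of the form $m-4.3$ and $m-4.37$ upgraded to $m-4$ by half-integrality of ghost slopes. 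The additive constant does not come from ``bookkeeping'' of the congruence precision, as you suggest, but from these $\log_p$-type error terms (e.g.\ \eqref{eqw3}, \eqref{eqr2}) and the explicit estimates in \S\ref{s4}. Without an argument at this level of precision, your steps two and three assert the conclusion rather than prove it, so the proposal has a genuine gap at the heart of the theorem.
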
 
\begin{remark}
\begin{enumerate}
	\item 	The conditions $p\geq 11$ and $2\leq a\leq p-5$ are from one of the key ingredients, local ghost conjecture (see \cite[Theorem~1.3]{xiao1}), to the proof of Theorem~\ref{thm1}.
	\item The integer $m$ in Theorem~\ref{thm1} plays the same role to the rational number $r$ in Statement~\ref{state:1}.
\end{enumerate}
\end{remark}
We note that Theorem~\ref{thm1} is a direct consequence of the following theorem, which is our main technical result. See details in \S\ref{section:2}.
\begin{theorem*}
Assume that $p\geq 11,$ $2\leq a\leq p-5$, $0\leq b\leq p-2$, and that $\ve=\ve_{1} \times \ve_{1} \omega^{k_{\ve}-2}$ is $\bar\rho$-relevant.
For  any integers $m\geq 4$, $k_{1}\geq 2$ and $k_{2}\geq 2$ such that $k_{1} \equiv k_{2}\equiv k_\ve\pmod{p-1}$ and
$k_{1} \equiv k_{2}\pmod{p^{m}}$, the multiset of $U_p$-slopes  with slope value $\leq m-4$ of $G_{\olr}^{(\ve)}(w_{k_1},-)$ is same to the one of  $G_{\olr}^{(\ve)}(w_{k_2},-)$.
\end{theorem*}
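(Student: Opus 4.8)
The plan is to reduce everything to a statement about the ghost series $G_{\olr}^{(\ve)}(w,t)$ attached to $\bar\rho$ in \cite{xiao,xiao1}, and then to exploit the explicit combinatorial description of its coefficients as products over ``ghost zeros''. The key point is that the local ghost conjecture (now a theorem, \cite[Theorem~1.3]{xiao1}) identifies the Newton polygon of the $U_p$-action on the relevant space of forms, in each weight $k$, with the Newton polygon of $G_{\olr}^{(\ve)}(w_k,-)$; so counting $U_p$-slopes $\le m-4$ becomes counting slopes $\le m-4$ of the latter power series. Thus it suffices to prove: if $k_1\equiv k_2\equiv k_\ve \pmod{p-1}$ and $k_1\equiv k_2\pmod{p^m}$, then the Newton polygons of $G_{\olr}^{(\ve)}(w_{k_1},-)$ and $G_{\olr}^{(\ve)}(w_{k_2},-)$ agree up to the point where slope $m-4$ is reached, i.e.\ $v_p(g_i(w_{k_1}))=v_p(g_i(w_{k_2}))$ for all $i$ below that break, where $g_i$ is the $i$-th coefficient of the ghost series.

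First I would recall the factorization of each coefficient $g_i(w)$ of the ghost series as (essentially) a monomial in $w-w_\star$ times a product $\prod_{j}(w-w_{k_j})^{m_{i,j}}$ over the ghost zeros $w_{k_j}$ indexed by classical weights $k_j$ in the relevant residue class mod $p-1$, with combinatorially prescribed multiplicities $m_{i,j}$. Specializing at $w=w_{k}$, one gets $v_p(g_i(w_k)) = \sum_j m_{i,j}\, v_p(w_k - w_{k_j})$ (up to an elementary correction term independent of $k$). Now $v_p(w_k-w_{k'})$ depends only on $v_p(k-k')$ when $k\equiv k' \pmod{p-1}$ — concretely $v_p(w_k-w_{k'}) = v_p(k-k')$ (or $v_p(k-k')+1$ with the standard normalization of the weight coordinate), and in particular once $k_1\equiv k_2\pmod{p^m}$ we have $v_p(w_{k_1}-w_{k_j}) = v_p(w_{k_2}-w_{k_j})$ for every ghost zero index $j$ with $v_p(k_\ve - k_j) < m$, and both quantities are $\ge m$ when $v_p(k_\ve-k_j)\ge m$. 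Hence $v_p(g_i(w_{k_1}))$ and $v_p(g_i(w_{k_2}))$ can only differ through contributions of ghost zeros that are $p^m$-adically close to $k_\ve$; one then needs to bound the total such contribution and show it cannot affect which $g_i$ lie below the slope-$(m-4)$ break.

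The second main step is therefore a slope bound: I would show that for the vertices of the Newton polygon occurring at slope $\le m-4$, the coefficients $g_i$ involved have $p$-adic valuation that is \emph{insensitive} to the ``bad'' ghost zeros, because those zeros contribute slope at least $m-3$ to the relevant Newton polygon segments. This is where the hypotheses $m\ge 4$ and the genericity bound $2\le a\le p-5$ enter: the ghost conjecture's control on the shape of the Newton polygon (near-criticality, the ``halo'' estimates, and the fact that slopes accumulate with controlled multiplicity) forces any ghost zero contributing to a low slope to be already ``seen'' at $p$-adic accuracy $<m$, so it is pinned down by the congruence $k_1\equiv k_2\pmod{p^m}$. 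Quantitatively one wants: a ghost zero $w_{k_j}$ with $v_p(k_\ve - k_j) \ge m$ contributes to $g_i$ only for indices $i$ whose Newton slope is $> m-4$ (the loss of $4$, versus the naive $m$, being the content of the $+4$/$+5$ in the main theorem).

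The main obstacle I expect is precisely this last quantitative slope estimate — making the ``$m$ vs.\ $m-4$'' gap rigorous. It requires going into the fine structure of the ghost series from \cite{xiao}: controlling how many ghost zeros can be $p^m$-adically close to a given weight, bounding their multiplicities $m_{i,j}$ in the coefficient $g_i$, and combining this with the lower bound on Newton slopes coming from the ``ghost vertices'' lemmas to conclude that their aggregate effect on the Newton polygon is confined to slopes exceeding $m-4$. Once that is in hand, the comparison of the two Newton polygons below slope $m-4$ is immediate from $v_p(w_{k_1}-w_{k_j})=v_p(w_{k_2}-w_{k_j})$ for all relevant $j$, and counting lattice-point-style the number of slopes $\le m-4$ on each side gives the claimed equality of multisets. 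I would organize the write-up as: (i) reduce to the ghost series via \cite[Theorem~1.3]{xiao1}; (ii) recall the coefficient factorization and the valuation formula for $v_p(w_k-w_{k'})$; (iii) prove the slope bound isolating bad ghost zeros above slope $m-4$; (iv) conclude by comparing Newton polygons.
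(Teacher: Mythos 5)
There is a genuine gap: your proposal reproduces the skeleton of the paper's strategy but stops exactly where the paper's actual work begins, and you say so yourself (``the main obstacle I expect''). Two specific points. First, step (i) is superfluous and slightly misreads the statement: the theorem to be proved is already a statement about the ghost series $G_{\olr}^{(\ve)}(w,t)$ itself; the local ghost conjecture of \cite{xiao1} is only used elsewhere, to deduce the main theorem about $S_k(\Gamma_0(Np),\psi)_{\mathfrak{m}_{\bar r}}$ from this one. Second, and more seriously, your steps (iii)--(iv) are not an argument but a description of what an argument would have to do. Knowing that $v_p(g_i(w_{k_1}))$ and $v_p(g_i(w_{k_2}))$ can differ only through ghost zeros $w_k$ with $k$ in the class $\calS$ modulo $(p-1)p^m$ does not by itself compare the two Newton polygons below slope $m-4$: one must exhibit a common vertex up to which the polygons agree and beyond which all slopes are large. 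The paper does this by taking $n_0=d_{\ko}^{\ur}$ (when $\tfrac{\ko-2}{2}\geq m-4$) or $n_0=d_{\kO}^{\ur}$ (otherwise), proving it is a vertex of $\NP(G(w_{\tk},-))$ for every $\tk\in\calS$ via the near-Steinberg range machinery of \cite{xiao} (Proposition~\ref{xiao:pro}), and then proving Proposition~\ref{prop1}, which says that any segment of the Newton polygon starting at $n_0$ has slope at least $\min\{m-4,\tfrac{k_0-2}{2}\}$.

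None of that quantitative content is supplied or even sketched in a checkable form in your proposal. In particular, the constant $4$ in ``$m-4$'' is not produced by any general principle you invoke; it comes out of explicit estimates: Proposition~\ref{lemma2} (the bound $\tfrac{k-2}{2}\geq \OD_{k,\frac12 d_k^{\new}}-\OD_{k,\frac12 d_k^{\new}-1}-4$, proved via \eqref{eqr2}--\eqref{eqw3} and a numerical minimization in $p\geq 11$, $k_\bullet$), Lemma~\ref{lem:2} relating this to maximal near-Steinberg ranges of slope $\tfrac{k-2}{2}$, and the proof of Proposition~\ref{prop1} in \S\ref{s4}, which splits into the cases $L=1$ and $L\geq 2$ and bounds sums of valuations $\sum v_p(k_\bullet-k_{0\bullet})$ over the explicit ranges of $k_\bullet$ allowed by Lemma~\ref{lem:8}, using Lemma~\ref{lem:6}. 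Your ``bound the multiplicities $m_{i,j}$ and count how many ghost zeros are $p^m$-close'' heuristic would not, as stated, yield these bounds: the zeros in $\calS$ enter with unbounded multiplicity near the Steinberg index, and the whole point of the paper's argument is to show that their influence is confined to segments lying entirely to the right of the vertex $n_0$, which requires the monotonicity and separation properties of $d_k^{\ur}$ and $d_k^{\Iw}-d_k^{\ur}$ (Lemma~\ref{lem:8}) together with Proposition~\ref{xiao:pro}(1). So the proposal is a reasonable plan consistent with the paper's approach, but the theorem's proof is essentially the missing step.
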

It is important to clarify that our paper does not conclusively prove the Gouvêa--Mazur conjecture, instead indicating a discrepancy of an additive factor of $-4$. In contrast, the example offered by Buzzard and Calegari shows a deviation of an additive factor of $-1$. By synthesizing these two findings, we have managed to reduce the accurate factor in our studied case to the interval $[-4, -1]$. We trust that with a more rigorous estimation for \eqref{eqw3} or Lemma~\ref{lemma3}, we can confine the factor even further within a smaller interval.

	\subsection*{Acknowledgment}
This manuscript would not have been conceivable without the insightful contributions derived from the works of Ruochuan Liu, Nha Truong, Liang Xiao, and Bin Zhao \cite{xiao, xiao1}. We extend our profound gratitude to Liang Xiao in particular, for numerous enlightening discussions concerning the ghost conjecture. Furthermore, we wish to express our sincere appreciation to the anonymous referees for their invaluable recommendations on enhancing the structure and readability of our document. This work is partially supported by the Chinese NSF grant KRH1411532, two grants KBH1411247 and KBH1411268 from Shanghai Science and Technology Development Funds, and a grant from the New Cornerstone Science Foundation.

\section{The local ghost conjecture}\label{section:2}
%
In this section, we introduce the so-called local ghost series, and prove 
Theorem~\ref{thm1} by combining  \cite[Theorem~8.7(1)]{xiao1} (the local ghost conjecture) with Theorem~\ref{thm2}. 

 Let $E$ be a finite extension of $\QQ_p$ that contains $\sqrt{p}$, whose  
ring of integers, uniformizer and  residue field are denoted by $\calO$, $\varpi$ and $\FF$, respectively. Let  $\omega:\FF_p^\times\to \calO^\times$ be the Teichmüller lift, and $\omega_1: \mathrm{I}_{\mathbb{Q}_p} \rightarrow \operatorname{Gal}\left(\mathbb{Q}_p\left(\mu_p\right) / \mathbb{Q}_p\right) \cong \mathbb{F}_p^{\times}$ denote the first fundamental character of the inertial $\operatorname{subgroup} \mathrm{I}_{\mathbb{Q}_p}$ at $p$. Let  $\Delta:=(\ZZ/p\ZZ)^\times$,  $\bfm_{\CC_p}$ be the maximal ideal of $\calO_{\CC_p}$, and  $w_k:=\exp (p(k-2))-1$ for each $k \in \mathbb{Z}$.

\begin{definition}\label{re:2}
	Let $\bar r:\Gal_\QQ\to \GL_2(\FF)$ be an absolutely irreducible representation  such that 	$\bar\rho:=\bar r|_ {\mathrm{I}_{\mathbb{Q}_{p}} }: \mathrm{I}_{\mathbb{Q}_{p}} \rightarrow \mathrm{GL}_{2}(\mathbb{F})$ is reducible, i.e. 
		$$\bar\rho\simeq \begin{bmatrix}\omega_{1}^{a+b+1} & *  \\ 0 & \omega_{1}^{b}\end{bmatrix}$$
 for some  $a , b\in\{0, \ldots, p-2\}$.
	\begin{enumerate}
\item 	
		We call $\bar\rho$ \emph{generic} if  $a \in\{2, \ldots, p-5\}$,  \emph{split} if $*=0$ and \emph{nonsplit} if $* \neq 0$. 
		\item
For a character $\ve: \Delta^{2} \rightarrow \mathcal{O}^{\times}$, we define 
	$\ve_1:\Delta\rightarrow \mathcal{O}^{\times}$ and $k_\varepsilon\in \{2,\dots,p\}$ by 	\begin{equation*}
		\ve=\ve_{1} \times \ve_{1} \omega^{k_{\ve}-2}.
	\end{equation*}
A character $\ve$ is called \emph{$\bar\rho$-relevant} if    $\ve(\bar{\alpha}, \bar{\alpha})=\omega(\bar{\alpha})^{a+2b}$ for all $\bar{\alpha} \in \Delta$, in which case, we define  $s_\ve\in \{0,\dots,p-2\}$
	from $\ve=\omega^{-s_{\varepsilon}+b} \times \omega^{a+s_{\varepsilon}+b}.$
\item For any $\bar\rho$-relevant character $\ve$, in \cite[Definition~2.25]{xiao}, 
so-called \emph{the ghost series} 
$$G_{\olr}^{(\ve)}(w, t):=\sum_{n \geq 0} g_n^{(\ve)}(w) t^n \in \mathbb{Z}_{p}[w] \llbracket t \rrbracket$$
is constructed, where
\begin{itemize}
	\item $g_n^{(\ve)}(w)=\prod\limits_{k \equiv k_{\ve} \pmod {p-1}}\left(w-w_{k}\right)^{m_n^{(\ve)}(k)} \in \mathbb{Z}_{p}[w],
	$ and
	\item
	the exponents 
	$${\small
		m_n^{(\ve)}(k):=\begin{cases}
			\begin{aligned}
				&\min \Big\{n-d_{k}^{\ur}\left(\ve_{1}\right),d_{k}^{\Iw}\left(\widetilde\ve_{1} \right)-d_{k}^{\ur}\left(\ve_{1}\right)-n\Big\}
			\end{aligned}& \textrm{if}\ d_{k}^{\ur}\left(\ve_{1}\right)<n<d_{k}^{\Iw}\left(\widetilde\ve_{1} \right)-d_{k}^{\ur}\left(\ve_{1}\right),\\ 0& \textrm{otherwise},
	\end{cases}}$$
	where $\widetilde\ve_1:=\ve_1\times \ve_1$ and $d_{k}^{\ur}\left(\ve_{1}\right), d_{k}^{\Iw}(\widetilde\ve_{1} )$ are defined in \cite[Notation 2.18]{xiao}.
\end{itemize}

\item For a power series $f(t)=\sum_{n=0}^\infty a_nt^n\in \CC_p[\![t]\!]$, we call the lower convex hull of the points $\{\left(n, v_{p}\left(a_n\right)\right)\}_{n\geq 0}$ the \emph{Newton polygon of $f$},  denote it by $\mathrm{NP}(f)$.
	\end{enumerate}

\end{definition}

\begin{notation}
	\begin{enumerate}
		\item Let  $\rmK_p:=\GL_2(\ZZ_p)$. 
		\item A primitive $\mathcal{O} \llbracket \rmK_p \rrbracket$-projective augmented module of type $\bar{\rho}$ is a finitely generated projective (right) $\mathcal{O} \llbracket \mathrm{GL}_2\left(\mathbb{Z}_p\right) \rrbracket$-module $\widetilde{\mathrm{H}}$ with a (right) $\mathrm{GL}_2\left(\mathbb{Q}_p\right) /\left({ }^p{ }_p\right)^{\mathbb{Z}}$-action extending the $\mathrm{GL}_2\left(\mathbb{Z}_p\right)$ action from the module structure, such that $\overline{\mathrm{H}}:=\widetilde{\mathrm{H}} /\left(\varpi, \mathrm{I}_{1+p \mathrm{M}_2\left(\mathbb{Z}_p\right)}\right)$ is isomorphic to the projective envelope of $\operatorname{Sym}^a \mathbb{F}^{\oplus 2} \otimes \operatorname{det}^b$ as an $\mathbb{F}\left[\mathrm{GL}_2\left(\mathbb{F}_p\right)\right]$-module, where $\mathrm{I}_{1+p \mathrm{M}_2\left(\mathbb{Z}_p\right)}$ is the augmentation ideal of the Iwasawa algebra $\mathcal{O} \llbracket 1+p \mathrm{M}_2\left(\mathbb{Z}_p\right) \rrbracket$.
		
		\item For a character $\ve: \Delta^{2} \rightarrow \mathcal{O}^{\times}$, we put
		$$
		\mathcal{O} \llbracket w \rrbracket^{(\varepsilon)}:=\mathcal{O} \llbracket \Delta \times \mathbb{Z}_p^{\times} \rrbracket \otimes_{\mathcal{O}\left[\Delta^2\right], \varepsilon} \mathcal{O} .
		$$
		
		Consider the universal character
		$$
		\begin{aligned}
			 \chi_{\text {univ }}^{(\varepsilon)}: B^{\mathrm{op}}\left(\mathbb{Z}_p\right):=\left(\begin{array}{cc}
				\mathbb{Z}_p^{\times} & 0 \\
				p \mathbb{Z}_p & \mathbb{Z}_p^{\times}
			\end{array}\right) &\longrightarrow\left(\mathcal{O} \llbracket w \rrbracket^{(\varepsilon)}\right)^{\times} \\
			\left(\begin{array}{cc}
				\alpha & 0 \\
				\gamma & \delta
			\end{array}\right)& \longmapsto \varepsilon(\bar{\alpha}, \bar{\delta}) \cdot(1+w)^{\log (\delta / \omega(\bar{\delta})) / p},
		\end{aligned}
		$$
 where $\omega: \mathbb{F}_p^{\times} \rightarrow \mathcal{O}^{\times}$  is the Teichmüller lift. The induced representation (for the right action convention)
		$$
		\begin{aligned}
			& \operatorname{Ind}_{B^{\text {op }}\left(\mathbb{Z}_p\right)}^{\mathrm{Iw}_p}\left(\chi_{\text {univ }}^{(\varepsilon)}\right):=\left\{\text { continuous functions } f: \operatorname{Iw}_p \rightarrow \mathcal{O} \llbracket w \rrbracket^{(\varepsilon)} ;\right. \\
			& \left.f(g b)=\chi_{\text {univ }}^{(\varepsilon)}(b) \cdot f(g) \text { for } b \in B^{\mathrm{op}}\left(\mathbb{Z}_p\right) \text { and } g \in \operatorname{Iw}_p\right\}.
		\end{aligned}
		$$
		
		\item  For an $\calO\llbracket \rmK_p\rrbracket$-projective augmented module $ \tH$ of type $\bar\rho$ and a $\bar\rho$-relevant character $\ve$, the space of abstract $p$-adic forms is defined as on \cite[Page 258]{xiao} by 
		$$
\mathrm{S}_{\widetilde{\mathrm{H}}, p \text {-adic }}^{(\varepsilon)}:=\operatorname{Hom}_{\mathcal{O}\left[\mathrm{Iw}_p\right]}\left(\widetilde{\mathrm{H}}, \operatorname{Ind}_{B^{\text {op }}\left(\mathbb{Z}_p\right)}^{\mathrm{Iw}_p}\left(\chi_{\text {univ }}^{(\varepsilon)}\right)\right) 
			 \cong \operatorname{Hom}_{\mathcal{O}\left[\mathrm{Iw}_p\right]}\left(\widetilde{\mathrm{H}}, \mathcal{C}^0\left(\mathbb{Z}_p ; \mathcal{O} \llbracket w \rrbracket^{(\varepsilon)}\right)\right),
		$$
		where   $\mathcal{C}^0\left(\mathbb{Z}_p ; \mathcal{O} \llbracket w \rrbracket^{(\ve)}\right)$ is the space of continuous functions on $\mathbb{Z}_p$ with values in $\mathcal{O} \llbracket w \rrbracket^{(\ve)}$.
		\item 
		We denote by $C_{\tH}^{(\ve)}(w,-)$ the characteristic power series of $U_p$ acting on 		$\mathrm{S}_{\tH, p\adic}^{(\ve)}.$
	\end{enumerate}

\end{notation}

We now prove Theorem~\ref{thm1} with assuming the following one, whose proof is given in \S\ref{s4}.

\begin{theorem}\label{thm2}
Assume that $p\geq 11,$ $2\leq a\leq p-5$, $0\leq b\leq p-2$, and that $\ve=\ve_{1} \times \ve_{1} \omega^{k_{\ve}-2}: (\ZZ/p\ZZ)^\times\times (\ZZ/p\ZZ)^\times\to \calO^\times$ is $\bar\rho$-relevant.
For  any integers $m\geq 4$, $k_{1}\geq 2$ and $k_{2}\geq 2$ such that $k_{1} \equiv k_{2}\equiv k_\ve\pmod{p-1}$ and
$k_{1} \equiv k_{2}\pmod{p^{m}}$, the multiset of $U_p$-slopes  with slope value $\leq m-4$ of $G_{\olr}^{(\ve)}(w_{k_1},-)$ is same to the one of  $G_{\olr}^{(\ve)}(w_{k_2},-)$.
\end{theorem}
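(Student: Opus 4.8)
The plan is to reduce the equality of slope multisets to a comparison of the $p$-adic valuations of the ghost coefficients $g_n^{(\ve)}(w)$ at the two points $w_{k_1}$ and $w_{k_2}$, exploiting that these points of weight space are extremely close. By definition the $U_p$-slopes of $G_{\olr}^{(\ve)}(w_{k_i},-)$ are the slopes of $\mathrm{NP}\bigl(G_{\olr}^{(\ve)}(w_{k_i},-)\bigr)$, i.e. of the lower convex hull of $\bigl\{\bigl(n,v_p(g_n^{(\ve)}(w_{k_i}))\bigr)\bigr\}_{n\ge 0}$; since $g_0^{(\ve)}\equiv 1$, both polygons start at the origin. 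It therefore suffices to exhibit an index $N$ such that $v_p\bigl(g_n^{(\ve)}(w_{k_1})\bigr)=v_p\bigl(g_n^{(\ve)}(w_{k_2})\bigr)$ for $0\le n\le N$, and such that for $i=1,2$ every point $\bigl(n,v_p(g_n^{(\ve)}(w_{k_i}))\bigr)$ with $n>N$ lies strictly above the part of $\mathrm{NP}\bigl(G_{\olr}^{(\ve)}(w_{k_i},-)\bigr)$ of slope $\le m-4$; the slopes $\le m-4$ of the two polygons then coincide.

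The arithmetic ingredient is the exact identity
\[
v_p(w_k-w_{k'})=v_p\bigl(\exp(p(k-2))-\exp(p(k'-2))\bigr)=1+v_p(k-k')\qquad(k\ne k'),
\]
which, plugged into $g_n^{(\ve)}(w)=\prod_{k'\equiv k_\ve\,(p-1)}(w-w_{k'})^{m_n^{(\ve)}(k')}$, gives for $i=1,2$
\[
v_p\bigl(g_n^{(\ve)}(w_{k_i})\bigr)=\sum_{k'\equiv k_\ve\,(p-1)}m_n^{(\ve)}(k')\bigl(1+v_p(k_i-k')\bigr)\in\ZZ_{\ge 0}\cup\{\infty\},
\]
the value being $\infty$ precisely when $m_n^{(\ve)}(k_i)\ge 1$. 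Since $k_1\equiv k_2\pmod{p^m}$, every $k'$ with $v_p(k_1-k')<m$ satisfies $v_p(k_2-k')=v_p(k_1-k')$, and hence contributes equally to the two sums; the two valuations can differ only through indices $k'\equiv k_1\equiv k_2\pmod{p^m}$. Call $n$ \emph{stable} if $m_n^{(\ve)}(k')=0$ for every such $k'$, and \emph{exceptional} otherwise. Then $v_p\bigl(g_n^{(\ve)}(w_{k_1})\bigr)=v_p\bigl(g_n^{(\ve)}(w_{k_2})\bigr)$ for all stable $n$, while for an exceptional $n$ there is $k^\ast\equiv k_1\pmod{p^m}$ with $m_n^{(\ve)}(k^\ast)\ge 1$, and the single factor at $k^\ast$ forces $v_p\bigl(g_n^{(\ve)}(w_{k_i})\bigr)\ge m_n^{(\ve)}(k^\ast)\,(1+m)\ge m+1$ for $i=1,2$ (with the convention $v_p(0)=\infty$).

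It remains to show that no exceptional index can be a vertex of slope $\le m-4$ of either Newton polygon; then the slope-$\le m-4$ part of each polygon is carried by stable indices, on which the two coefficient sequences agree, and the theorem follows. Suppose $n$ were an exceptional vertex of slope $\le m-4$ of $\mathrm{NP}\bigl(G_{\olr}^{(\ve)}(w_{k_i},-)\bigr)$. Then $v_p\bigl(g_n^{(\ve)}(w_{k_i})\bigr)=\mathrm{NP}(n)\le(m-4)\,n$, while with $k^\ast$ as above, $m_n^{(\ve)}(k^\ast)\ge 1$ forces $n>d_{k^\ast}^{\ur}(\ve_1)$, so $m_n^{(\ve)}(k^\ast)$ — and with it $v_p\bigl(g_n^{(\ve)}(w_{k_i})\bigr)$ — grows like $n-d_{k^\ast}^{\ur}(\ve_1)$ as $n$ increases past $d_{k^\ast}^{\ur}(\ve_1)$; comparing $m_n^{(\ve)}(k^\ast)(1+m)$ with $(m-4)\,n$ confines $n$ to a short window above $d_{k^\ast}^{\ur}(\ve_1)$, and a finer estimate of the valuations in that window — using also the combinatorics of the remaining exponents under the genericity hypotheses, and carried out with an extra loss — rules out exceptional vertices there as well. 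This last analysis is exactly the role of \eqref{eqw3}/Lemma~\ref{lemma3}, and I expect it to be the main obstacle: the uniform bound only delivers ``valuation $\ge m+1$'' at an exceptional index, whereas preventing it from being a small-slope vertex needs roughly ``valuation $>(m-4)\cdot(\text{abscissa})$'', and it is precisely the slack between these — together with the borderline small weights $k^\ast$ (including the cases $k^\ast\in\{k_1,k_2\}$) where $d_{k^\ast}^{\ur}(\ve_1)$ is itself small — that forces the additive constant $4$ rather than the conjectural $0$. A sharper version of \eqref{eqw3} or of Lemma~\ref{lemma3} would immediately shrink this gap; assembling the bound then produces an $N$ with the two required properties and finishes the proof.
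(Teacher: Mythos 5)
Your reduction to a coefficientwise comparison is sound as far as it goes: since every $k'\in\calK$ with $v_p(k_1-k')<m$ contributes identically at $w_{k_1}$ and $w_{k_2}$, the valuations $v_p\bigl(g_n^{(\ve)}(w_{k_1})\bigr)$ and $v_p\bigl(g_n^{(\ve)}(w_{k_2})\bigr)$ can only differ at indices $n$ where some weight in $\calS=\{k'\in\calK: k'\equiv k_1 \bmod (p-1)p^m\}$ has $m_n^{(\ve)}(k')>0$, and there both valuations are large. This is parallel in spirit to the paper, which anchors the comparison at the minimal weight $\ko\in\calS$ and shows the two polygons agree up to a distinguished vertex $n_0$ beyond which all slopes are $\geq m-4$.

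The genuine gap is that everything after this reduction — which is the actual content of the theorem — is asserted rather than proved, and the sketch you give for it would not work as stated. First, the ``short window'' argument fails: $m_n^{(\ve)}(k^\ast)$ is a tent function, equal to $\min\{n-d_{k^\ast}^{\ur},\,d_{k^\ast}^{\Iw}-d_{k^\ast}^{\ur}-n\}$, so it does not grow like $n-d_{k^\ast}^{\ur}$ throughout; comparing $m_n^{(\ve)}(k^\ast)(m+1)$ with $(m-4)n$ confines nothing near the right end of the interval $(d_{k^\ast}^{\ur},\,d_{k^\ast}^{\Iw}-d_{k^\ast}^{\ur})$, where the exponent is again small while $n$ is large, and it ignores the (dominant, $n$-dependent) contribution of the non-exceptional factors to $\mathrm{NP}(n)$. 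Second, ``rule out exceptional vertices'' is not the right target in the hard regime: when $k_0\in\calS$ has $\frac{k_0-2}{2}<m-4$, the small-slope part of each polygon genuinely acquires a long segment of slope $\frac{k_0-2}{2}$ spanning the exceptional range $(d_{k_0}^{\ur},\,d_{k_0}^{\Iw}-d_{k_0}^{\ur})$; what must be shown is that this is a single segment with the same (stable) endpoints and slope for both specializations, and that the segment leaving its right endpoint $n_0$ has slope $\geq\min\{m-4,\cdot\}$. Establishing exactly this is what the paper's machinery does: the near-Steinberg description of vertices (Proposition~\ref{xiao:pro}), the bound $\frac{k-2}{2}\geq \OD_{k,\frac12 d_k^\new}-\OD_{k,\frac12 d_k^\new-1}-4$ of Proposition~\ref{lemma2} (proved via Lemma~\ref{lemma3}, \eqref{eqw3} and a numerical estimate — this is where the constant $4$ comes from), Lemma~\ref{lem:2}, and above all Proposition~\ref{prop1}, whose proof occupies all of Section~\ref{s4} with the $L=1$ and $L\geq 2$ case analysis, the $\calK^+/\calK^-$ decomposition, and the valuation-sum estimates of Lemma~\ref{lem:6}. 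Your proposal names \eqref{eqw3} and Lemma~\ref{lemma3} as ``exactly the role'' of the missing step but supplies no substitute for them, so as written it is a plan whose decisive quantitative steps are absent.
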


\begin{proof}[Proof of Theorem~\ref{thm1} assuming Theorem~\ref{thm2}]
	We first point out that from our assumption (1) of this theorem and $k_i$ are integers, we have $k_i\geq 2$ for $i=1,2$.
	
Let $$K^{p}:=\left\{\begin{bmatrix}
	a&b\\
	c&d
\end{bmatrix} \in \GL_2(\hat \ZZ^{(p)})\;\Big|\;  N | c\right\} \subseteq \GL_{2}(\mathbb{A}_{f}^{p}).$$ We simply denote by $\widetilde{\mathrm{H}}(K^{p})_{\mathfrak{m}_{\bar r}}$ the $\bar r$-localized completed homology with tame level $K^{p}$; that is  
	$$	\widetilde{\mathrm{H}}\left(K^{p}\right)_{\mathfrak{m}_{\bar r}}:=
	\underset{n\to \infty}{\varprojlim}  \mathrm{H}_{1}^{\text {ét }}\left(Y(K^{p}(1+p^{n} \mathrm{M}_{2}(\mathbb{Z}_{p})))_{\bar{\mathbb{Q}}}, \mathcal{O}\right)_{\mathfrak{m}_{\bar r}}^{\text {cplx=1 }},
	$$
where for every $n\geq 1$,
\begin{itemize}
	\item  $Y(K^{p}(1+p^{n} \mathrm{M}_{2}(\mathbb{Z}_{p})))$ is the corresponding open modular curve for $K^{p}(1+p^{n} \mathrm{M}_{2}(\mathbb{Z}_{p}))$ over $\mathbb{Q}$, and 
		\item 
		$
		\mathrm{H}_{1}^{\text {ét }}\left(Y(K^{p}(1+p^{n} \mathrm{M}_{2}(\mathbb{Z}_{p})))_{\bar{\mathbb{Q}}}, \mathcal{O}\right)_{\mathfrak{m}_{\bar r}}^{\text {cplx=1 }}
		$
		denotes the subspace of the first étale homology of the modular curve $Y(K^{p}(1+p^{n} \mathrm{M}_{2}(\mathbb{Z}_{p})))$ localized at $\mathfrak{m}_{\bar r}$, on which a fixed complex conjugation acts by $1$. 
\end{itemize}
	As explained on page $249$ of \cite{xiao}, $\widetilde{\mathrm{H}}(K^{p})_{\mathfrak{m}_{\bar r}}$ is an $\calO\llbracket \rmK_p\rrbracket$-projective augmented module of type $\bar\rho$, and for both $i=1,2$, we have
	\begin{equation}\label{eq::1}
		S_{k_i}(\Gamma_0(Np),\psi)_{\mathfrak{m}_{\bar r}}\cong \Hom_{\mathcal{O} \llbracket \Iw_p\rrbracket}\left(\widetilde{\mathrm{H}}\left(K^{p}\right)_{\mathfrak{m}_{\bar r}}, \operatorname{Sym}^{k_i-2} \mathcal{O}^{\oplus 2}\otimes (1\times \psi)\right)
	\end{equation}
as Hecke modules.
From the assumption (1) of this theorem, for both $i=1,2$, we have 
\begin{equation}\label{eq::2}
	\Hom_{\mathcal{O} \llbracket \Iw_p \rrbracket}\left(\widetilde{\mathrm{H}}\left(K^{p}\right)_{\mathfrak{m}_{\bar r}}, \operatorname{Sym}^{k_i-2} \mathcal{O}^{\oplus 2}\otimes (1\times \psi)\right)\subseteqq
S_{\widetilde{\mathrm{H}}\left(K^{p}\right)_{\mathfrak{m}_{\bar r}},p\adic}^{(1\times \omega^{a+2b})}\otimes_{\mathcal{O} \llbracket w \rrbracket, w \mapsto w_{k_i} }\mathcal{O}
\end{equation} as 
	Hecke modules, and the space on left hand side contains the first $\rank_\calO(S_{k_i}(\Gamma_0(Np),\psi)_{\mathfrak{m}_{\bar r}})$-st $U_p$-slopes of the one on right hand side. This implies that the sets of $U_p$-slopes with value $<k_i-1$ of the two spaces in \eqref{eq::2}  are the same. Hence, by our assumption (1) of this theorem, it is enough to show that 
$$d\left(	S_{\widetilde{\mathrm{H}}\left(K^{p}\right)_{\mathfrak{m}_{\bar r}},p\adic}^{(1\times \omega^{a+2b})}\otimes_{\mathcal{O} \llbracket w \rrbracket, w \mapsto w_{k_1} }\mathcal{O}, m-4\right)=d\left(	S_{\widetilde{\mathrm{H}}\left(K^{p}\right)_{\mathfrak{m}_{\bar r}},p\adic}^{(1\times \omega^{a+2b})}\otimes_{\mathcal{O} \llbracket w \rrbracket, w \mapsto w_{k_2} }\mathcal{O}, m-4\right).$$

By \cite[Theorem~8.7]{xiao1},  for both $i=1,2$, 
	the Newton polygon $\NP\left(C_{\widetilde{\mathrm{H}}\left(K^{p}\right)_{\mathfrak{m}_{\bar r}}}^{(\ve)}(w_{k_i},-)\right)$ is same to  $\NP\left(G_{\olr}^{(\ve)}(w_{k_i},-)\right)$, stretched in both $x$- and $y$-directions by $m\left(\widetilde{\mathrm{H}}\left(K^{p}\right)_{\mathfrak{m}_{\bar r}}\right)$ (defined in \cite[Definition~8.2]{xiao1}).
Combined with Theorem~\ref{thm2}, this implies that 
the $U_p$-slopes  with the slope value $\leq m-4$ of $C_{\widetilde{\mathrm{H}}\left(K^{p}\right)_{\mathfrak{m}_{\bar r}}}^{(\ve)}(w_{k_1},-)$ is same as those of $C_{\widetilde{\mathrm{H}}\left(K^{p}\right)_{\mathfrak{m}_{\bar r}}}^{(\ve)}(w_{k_2},-)$.
Combining this with \eqref{eq::1} and \eqref{eq::2}, we complete the proof.
\end{proof}

	\section{Near-Steinberg ranges}\label{s3}
From now on,  we fix some prime $p\geq 11$, a pair of integers $(a,b)$ with $2\leq a\leq p-5$, $0\leq b\leq p-2$, and a $\bar\rho$-relevant character 
$\ve: \Delta^{2} \rightarrow \mathcal{O}^{\times}$. 
  For simplicity, we omit the subscript $\olr$ and superscript $(\ve)$ from the notation. By \cite[Remark~2.30(3)]{xiao}, we may and will assume $b=0$.
%
%
%
%
%
%
%
%

\begin{notation}\label{notation:2}
\begin{enumerate}
	\item Let $\calK:=\{k\geq 2\;|\;k\equiv k_\ve\pmod {p-1}\}$. For any $k\in \calK$, we put $k:=k_{\ve}+k_{\bullet}(p-1)$,  $d_{k}^{\ur}:=d_{k}^{\ur}\left(\ve_{1}\right)$ (see Definition~\ref{re:2}(2) for  $\ve_1$), $d_{k}^{\Iw}:=d_{k}^{\Iw}\left(\widetilde\ve_1\right)$ and
	$d_k^\new:=d_k^\Iw-2d_k^\ur$.
	\item		We fix some $m\geq 4$ and $k_1\in \calK$. Let $\calS:=\{k\in \calK\;|\; k\equiv k_1\pmod {(p-1)p^m}\}$, and denote by $k_0^{\min }$
	and $\kO$ the two smallest  elements in $\calS$, i.e. $\ko\in \calS$ such that 
	 $2 \leq k_0^{\min }<2+(p-1) p^m$ and $k_0^{\max }:=k_0^{\min }+(p-1) p^m$. 
	
	\item	For any $k\in \calK$ and $\ell \in \{-\frac{1}{2} d_{k}^\new,-\frac{1}{2} d_{k}^\new+1, \ldots, \frac{1}{2} d_{k}^\new\}$, we put 
		\begin{equation*}
		\Delta'_{k, \ell}:=v_{p}\left(g_{\frac{1}{2} d_{k}^{\mathrm{Iw}}+\ell, \hat{k}}\left(w_{k}\right)\right)-\frac{k-2}{2} \ell, 
	\end{equation*}
	where $g_{\frac{1}{2} d_{k}^{\mathrm{Iw}}+\ell, \hat{k}}\left(w\right)$ is the polynomial in $w$ constructed by removing the $(w-w_k)$-factors in $g_{\frac{1}{2} d_{k}^{\mathrm{Iw}}+\ell}\left(w\right)$. 
	By \cite[Proposition~4.18(4)]{xiao}, we have
	\begin{equation}\label{neq::19}
		\Delta'_{k, \ell}=\Delta'_{k,-\ell} \quad \text { for all } \ell=-\frac{1}{2} d_{k}^{\new}, -\frac{1}{2} d_{k}^{\new}+1, \ldots, \frac{1}{2} d_{k}^{\new}.
	\end{equation}
	
	\item	We denote by $\OD_{k}$ the 
	lower convex hull  of the set of points $$\left\{(\ell, \Delta'_{k, \ell})\;|\; \ell\in\{-\tfrac{1}{2}d_k^\new,\dots, \tfrac{1}{2} d_k^\new\}
	\right\},$$
	and by $\left(\ell, \OD_{k, \ell}\right)$ the corresponding points on this convex hull. 
	\item For any integer $n$, we denote by $\{n\}\in \{0, \ldots, p-2\}$ the residue of $n$ modulo $p-1$, and let $$\delta:=\tfrac{1}{p-1}(\{a+s\}+s-\{a+2s\})=0 \textrm{~or~} 1,$$
	where	$s=s_\ve$ is defined in Definition~\ref{re:2}(2). 
	
\item 	We define $t_1$, $t_2$ by  \begin{center}
		\begin{tabular}{ |c|c|c| } 
			\hline
			& $t_1 $  & $t_2$   \\ 
			\hline
			$a+s<p-1$	& 	$s+\delta$ &  	$a+s+\delta+2$ \\ 
			\hline
			$a+s\geq p-1$	& 	$\left\{a+s\right\}+\delta+1$& 	$s+\delta+1$\\ 
			\hline
		\end{tabular}
	\end{center}
and 
	$\beta_{n}:= \begin{cases}t_1 & \text { if } n \text { is even}, \\ t_2-\frac{p+1}{2} & \text { if } n \text { is odd}.\end{cases}$

	\item For any integer $n$, let $$\theta_n:=\beta_{n-1}-\beta_{n}+\frac{p+1}{2}\quad \textrm{and}\quad 	\eta_n:=\frac{p-1}{2} k_{\bullet}-\frac{p+1}{2} \delta+\beta_{n}-1.$$ 
	By  the table in \cite[Lemma~5.2]{xiao}, $\theta_n$ is
	equal to $a+2$ or $p-1-a$, and hence satisfies
	\begin{equation}\label{eqr10}
		\theta_n\leq p-3.
	\end{equation}
\end{enumerate} 
\end{notation}

\begin{lemma}\label{eqr11}
We have	
\begin{enumerate}
	\item 	$t_1+t_2=	s+\{a+s\}+2+2\delta$,
	\item $4\leq t_2-t_1 \leq p-3.$
\end{enumerate}

\end{lemma}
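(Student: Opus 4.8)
\textbf{Proof plan for Lemma~\ref{eqr11}.}

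The statement is a pair of purely arithmetic identities/inequalities about the quantities $t_1, t_2$, which are defined by an explicit two-by-two table in Notation~\ref{notation:2}(6), together with the auxiliary quantity $\delta \in \{0,1\}$ from Notation~\ref{notation:2}(5). So the whole proof will be bookkeeping split into the two cases $a+s < p-1$ and $a+s \geq p-1$, exactly matching the two rows of the table. For part (1), in the case $a+s < p-1$ we have $t_1 = s+\delta$ and $t_2 = a+s+\delta+2$, so $t_1+t_2 = a+2s+2+2\delta$; here one must observe that $\{a+s\} = a+s$ because $0 \le a+s \le p-2$ (using $2 \le a \le p-5$ and the range of $s = s_\ve \in \{0,\dots,p-2\}$ — I would double check the exact admissible range of $s$ from Definition~\ref{re:2}(2), but in this row $a+s<p-1$ is assumed so $\{a+s\}=a+s$ is immediate), which turns $a+2s+2+2\delta$ into $s+\{a+s\}+2+2\delta$, the desired right-hand side. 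In the case $a+s \geq p-1$ we have $t_1 = \{a+s\}+\delta+1$ and $t_2 = s+\delta+1$, so $t_1+t_2 = s+\{a+s\}+2+2\delta$ directly. Either way part (1) drops out once the cases are unwound; the only mild subtlety is keeping straight that $\{a+s\}$ means reduction mod $p-1$, not mod $p$.

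For part (2), I would compute $t_2 - t_1$ in each row. When $a+s < p-1$: $t_2 - t_1 = (a+s+\delta+2) - (s+\delta) = a+2$, and since $2 \le a \le p-5$ this gives $4 \le t_2-t_1 \le p-3$. When $a+s \ge p-1$: $t_2 - t_1 = (s+\delta+1) - (\{a+s\}+\delta+1) = s - \{a+s\}$. Now $\{a+s\} = a+s-(p-1)$ in this regime (since $a+s < (p-2)+(p-2) < 2(p-1)$, so subtracting $p-1$ once lands in $\{0,\dots,p-2\}$ — again I should confirm the upper bound on $a+s$ so that only one subtraction is needed, which follows from $a \le p-5$ and $s \le p-2$), hence $s - \{a+s\} = (p-1) - a$. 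Since $2 \le a \le p-5$, this is between $4$ and $p-3$. So in both cases $t_2 - t_1 \in \{a+2,\, p-1-a\}$, which is exactly the range of $\theta_n$ recorded in Notation~\ref{notation:2}(8); indeed one could phrase the proof of (2) as simply noting $t_2 - t_1 = \theta_n$ for $n$ even (from the definition $\theta_n = \beta_{n-1} - \beta_n + \frac{p+1}{2}$ with $\beta_{\text{even}} = t_1$, $\beta_{\text{odd}} = t_2 - \frac{p+1}{2}$, so $\theta_{\text{odd}} = t_1 - (t_2 - \frac{p+1}{2}) + \frac{p+1}{2}$ — wait, one must be careful about parity, so it is cleaner to just do the direct two-case computation above rather than route through $\theta_n$).

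The main obstacle — and it is a minor one — is simply ensuring all the parameter ranges line up so that the reductions mod $p-1$ behave as claimed, in particular that $\{a+s\} = a+s-(p-1)$ (not a double reduction) in the second case and that $a+2 \le p-3$ forces $p \ge$ something consistent with the standing hypothesis $p \ge 11$ and $a \le p-5$. I would also cross-check against the table in \cite[Lemma~5.2]{xiao} cited in Notation~\ref{notation:2}(8), which already asserts $\theta_n \in \{a+2, p-1-a\}$; since $t_2 - t_1$ equals this same pair of values, part (2) is essentially a restatement, and consistency with that reference is a good sanity check. No deep input is needed: the lemma is a packaging statement whose content is entirely in the explicit definitions.
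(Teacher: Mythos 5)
Your proof is correct and follows essentially the same route as the paper: part (1) is just unwinding the table in Notation~3.1(6) (with $\{a+s\}=a+s$ when $a+s<p-1$ and $\{a+s\}=a+s-(p-1)$ otherwise), and part (2) is the observation that $t_2-t_1$ equals $a+2$ or $p-1-a$, which the genericity bound $2\leq a\leq p-5$ places in $[4,p-3]$. The paper states these steps without writing out the case computations; your version merely makes them explicit (your aside routing through $\theta_n$ and the reference to a nonexistent item ``(8)'' are inessential and rightly discarded in favor of the direct calculation).
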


\begin{proof}
	(1) It follows from Notation~\ref{notation:2}(6) directly.
	
	(2) 	Note that by Notation~\ref{notation:2}(6), $t_2-t_1=a+2$ or $p-1-a$. Hence, our assumption $2\leq a\leq p-5$ from the beginning of this section forces $4\leq t_2-t_1\leq p-3$. 
\end{proof}

%
%

\begin{lemma}\label{lem:8}
	\begin{enumerate}
		\item For any $k\in \calK$, we have
		$$
			d_{k}^{\mathrm{Iw}}=2 k_{\bullet}+2-2 \delta
		\quad \textrm{and}\quad
		 d_{k}^{\mathrm{ur}}=\left\lfloor\frac{k_{\bullet}-t_{1}}{p+1}\right\rfloor+\left\lfloor\frac{k_{\bullet}-t_{2}}{p+1}\right\rfloor+2.$$
		Clearly, 
		\begin{equation}\label{eqw1}
			 \frac{2k_\bullet}{p+1}-2\leq d_{k}^\ur\leq \frac{2k_\bullet}{p+1}+2.
		\end{equation}
	\item	For any $k,k'\in \calK$ such that $k<k'$, we have
		$$d_{k}^\ur\leq d_{k'}^\ur\quad \textrm{and}\quad d_{k}^\Iw-d_{k}^\ur \leq d_{k'}^\Iw-d_{k'}^\ur,$$
		where the second inequality is strict if $k'_\bullet\geq k_\bullet+2$.
		\item 	
		For any distinct $k,k'\in \calK$, if $d_{k}^\ur=d_{k'}^\ur$ or $d^\Iw_{k}-d^\ur_{k}=d^\Iw_{k'}-d^\ur_{k'}$, then 
		$\v(k-k')=0$. 
	\end{enumerate}
\end{lemma}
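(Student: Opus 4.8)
The plan is to unwind the definitions of $d_k^\ur(\ve_1)$ and $d_k^\Iw(\widetilde\ve_1)$ from \cite[Notation~2.18]{xiao}, reduce the three assertions to elementary estimates with floor functions, and invoke the genericity input $2\le a\le p-5$ (through Lemma~\ref{eqr11}) exactly once, at the tightest point of part~(3). For part~(1) I would first record, from the dimension count underlying the table in \cite[Lemma~5.2]{xiao} together with the normalization $b=0$, that $d_k^\Iw=2k_\bullet+2-2\delta$ and $d_k^\ur=\big\lfloor\tfrac{k_\bullet-t_1}{p+1}\big\rfloor+\big\lfloor\tfrac{k_\bullet-t_2}{p+1}\big\rfloor+2$, which is just a rewriting of the formulas in \cite{xiao} in terms of $t_1,t_2,\delta$ (Notation~\ref{notation:2}(5)--(6)). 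The sandwich \eqref{eqw1} then follows by crude bounds: since $t_i\ge 0$, each floor is $\le k_\bullet/(p+1)$, giving the upper bound, and since $\lfloor x\rfloor>x-1$, each floor is $>\tfrac{k_\bullet-t_i}{p+1}-1$, so $d_k^\ur>\tfrac{2k_\bullet-(t_1+t_2)}{p+1}$; as $t_1+t_2=s+\{a+s\}+2+2\delta\le 2p\le 2(p+1)$ by Lemma~\ref{eqr11}(1), the lower bound follows.

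For part~(2): since $k,k'\in\calK$ both reduce to $k_\ve$ modulo $p-1$, the inequality $k<k'$ is equivalent to $k_\bullet<k'_\bullet$, and monotonicity of $k\mapsto d_k^\ur$ is immediate from the floor formula. Writing $d_k^\Iw-d_k^\ur=2k_\bullet-2\delta-\big\lfloor\tfrac{k_\bullet-t_1}{p+1}\big\rfloor-\big\lfloor\tfrac{k_\bullet-t_2}{p+1}\big\rfloor$ and using $\lfloor x+y\rfloor\le\lfloor x\rfloor+\lfloor y\rfloor+1$, passing from $k_\bullet$ to $k'_\bullet$ raises the linear term by $2(k'_\bullet-k_\bullet)$ and each floor by at most $\big\lfloor\tfrac{k'_\bullet-k_\bullet}{p+1}\big\rfloor+1$, so the net change is at least $2\big((k'_\bullet-k_\bullet)-\big\lfloor\tfrac{k'_\bullet-k_\bullet}{p+1}\big\rfloor-1\big)\ge 0$; when $k'_\bullet\ge k_\bullet+2$, the integer $(k'_\bullet-k_\bullet)-\big\lfloor\tfrac{k'_\bullet-k_\bullet}{p+1}\big\rfloor-1$ is $\ge 1$ because $p\ge 11$, so the change is $\ge 2>0$, yielding strictness.

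For part~(3): write $k<k'$ and $d:=k'_\bullet-k_\bullet\ge 1$; since $\gcd(p,p-1)=1$ and $k-k'=-d(p-1)$ we have $\v(k-k')=\v(d)$, so it suffices to show $p\nmid d$. If $d_k^\Iw-d_k^\ur=d_{k'}^\Iw-d_{k'}^\ur$, part~(2) forces $d\le 1$, hence $d=1$. If $d_k^\ur=d_{k'}^\ur$, nonnegativity of the two floor increments forces each of $\big\lfloor\tfrac{k_\bullet+d-t_i}{p+1}\big\rfloor-\big\lfloor\tfrac{k_\bullet-t_i}{p+1}\big\rfloor$ to vanish; if $p\mid d$ and $d\ne p$ then $d\ge 2p$ and each increment is $\ge\big\lfloor\tfrac{d}{p+1}\big\rfloor\ge 1$, a contradiction, so $d=p$; but for the shift $d=p$ one has $\big\lfloor\tfrac{y+p}{p+1}\big\rfloor-\big\lfloor\tfrac{y}{p+1}\big\rfloor=0$ precisely when $(p+1)\mid y$, so vanishing of both increments gives $(p+1)\mid(k_\bullet-t_1)$ and $(p+1)\mid(k_\bullet-t_2)$, hence $(p+1)\mid(t_2-t_1)$, contradicting $4\le t_2-t_1\le p-3$ from Lemma~\ref{eqr11}(2).

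The main obstacle is twofold and both parts are mild: first, matching the formulas of \cite{xiao} with the combinatorial quantities $t_1,t_2,\delta$ in part~(1), which is bookkeeping once the relevant statements in \cite{xiao} are located; and second, the borderline shift $d=p$ in part~(3), where a priori both floor functions could remain constant over a $p$-step increase of $k_\bullet$, and it is exactly the genericity bound $4\le t_2-t_1\le p-3$ (equivalently $2\le a\le p-5$) that rules this out. Everything else is routine floor-function arithmetic using $p\ge 11$.
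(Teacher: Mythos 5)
Your proposal is correct and follows essentially the same route as the paper: import the closed formulas for $d_k^{\Iw}$ and $d_k^{\ur}$ from \cite{xiao}, get \eqref{eqw1} and part (2) by elementary floor estimates, and settle part (3) through the genericity bound $4\le t_2-t_1\le p-3$ of Lemma~\ref{eqr11}(2). The only (minor) difference is in the case $d_k^{\ur}=d_{k'}^{\ur}$ of part (3), where the paper bounds $|k_\bullet-k'_\bullet|\le p-3$ outright from the constancy of the floor sum, while you instead rule out $p\mid(k'_\bullet-k_\bullet)$ by examining the floor increments and excluding the borderline shift $p$ via the same bound on $t_2-t_1$; both arguments rest on the identical input and are equally valid.
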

\begin{proof}
The two equalities in  (1) are directly from \cite[Corollary~4.4 and Proposition~4.7]{xiao} and imply 
both the inequality in (1) and the first inequality in (2). The second inequality in (2) follows from 
\[d_{k'}^\Iw-d_{k}^\Iw-(d_{k'}^\ur-d_{k}^\ur)\geq 2(k'_\bullet-k_\bullet)-2\left\lceil \frac{k'_\bullet-k_\bullet}{p+1}\right\rceil\geq 0,\]
where the second inequality above is strict if $k'_\bullet-k_\bullet\geq 2.$

For (3), if $d^\Iw_{k}-d^\ur_{k}=d^\Iw_{k'}-d^\ur_{k'},$ combining (2) with our assumption $k\neq k'$, we have 
$k'_\bullet-k_\bullet=1$ and hence $\v(k'_\bullet-k_\bullet)=0$. 
Now we assume that $
	d_{k}^\ur=d_{k'}^\ur.
	$
Combing the second equality in (1), Lemma~\ref{eqr11}(2) and $d_{k}^\ur=d_{k'}^\ur$, we have $|k_{\bullet}-k'_\bullet|\leq p-3$, and hence 	$\v(k-k')=0$.
\end{proof}
\begin{corollary}\label{coro:1}
For any $k\in \calK$, if we write $k_\bullet=A+B(p+1)$ with $t_1 \leq A\leq t_1+p$ and $B\in \ZZ$, then 
\begin{enumerate}
	\item if $d_k^{\ur}$ is odd, then $A \leqslant t_2-1$,
	\item if $d_k^\ur$ is even, then $t_2 \leqslant A$. 
\end{enumerate}
\end{corollary}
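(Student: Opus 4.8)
The plan is to evaluate the two floor terms in the formula
\[
d_{k}^{\mathrm{ur}}=\left\lfloor\frac{k_{\bullet}-t_{1}}{p+1}\right\rfloor+\left\lfloor\frac{k_{\bullet}-t_{2}}{p+1}\right\rfloor+2
\]
from Lemma~\ref{lem:8}(1) directly, using the normalization $k_\bullet=A+B(p+1)$ with $t_1\le A\le t_1+p$. The key observation is that the chosen range for $A$ is precisely a complete set of residues modulo $p+1$ starting at $t_1$, so the first floor is pinned down at once, and Lemma~\ref{eqr11}(2) controls where $t_2$ sits inside this range, pinning down the second floor up to the dichotomy $A\ge t_2$ versus $A\le t_2-1$.

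First, since $0\le A-t_1\le p<p+1$, we have $k_\bullet-t_1=(A-t_1)+B(p+1)$ with $0\le A-t_1<p+1$, hence $\lfloor (k_\bullet-t_1)/(p+1)\rfloor=B$. Next, write $k_\bullet-t_2=(A-t_2)+B(p+1)$, so that $\lfloor (k_\bullet-t_2)/(p+1)\rfloor=B+\lfloor (A-t_2)/(p+1)\rfloor$. Here I invoke Lemma~\ref{eqr11}(2), which gives $4\le t_2-t_1\le p-3$; in particular $t_1<t_2<t_1+p$. Therefore: if $A\ge t_2$, then $0\le A-t_2\le (t_1+p)-t_2\le p-4<p+1$, so $\lfloor (A-t_2)/(p+1)\rfloor=0$; if instead $A\le t_2-1$, then $-(p+1)<-(t_2-t_1)\le A-t_2\le -1<0$, so $\lfloor (A-t_2)/(p+1)\rfloor=-1$.

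Substituting into the displayed formula yields $d_k^{\mathrm{ur}}=2B+2$ (even) when $A\ge t_2$, and $d_k^{\mathrm{ur}}=2B+1$ (odd) when $A\le t_2-1$. Contrapositively, this gives exactly the two implications asserted: $d_k^{\mathrm{ur}}$ odd forces $A\le t_2-1$, and $d_k^{\mathrm{ur}}$ even forces $t_2\le A$ (in fact one obtains the equivalences). There is no substantive obstacle in this argument; the only point requiring care is the range bookkeeping for the second floor, which is where Lemma~\ref{eqr11}(2) is essential — without the bound $t_2-t_1\le p-3$ the floor $\lfloor (A-t_2)/(p+1)\rfloor$ could take a value other than $0$ or $-1$ and the clean parity statement would fail.
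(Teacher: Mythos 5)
Your proposal is correct and follows essentially the same route as the paper: both evaluate the two floor terms in the formula $d_{k}^{\mathrm{ur}}=\lfloor\frac{k_{\bullet}-t_{1}}{p+1}\rfloor+\lfloor\frac{k_{\bullet}-t_{2}}{p+1}\rfloor+2$ from Lemma~\ref{lem:8}(1) under the normalization $t_1\le A\le t_1+p$, using the bound $t_2-t_1\le p-3$ of Lemma~\ref{eqr11}(2) to pin the second floor at $0$ or $-1$ and read off the parity. Your version is merely more explicit (it yields $d_k^{\mathrm{ur}}=2B+1$ or $2B+2$ and hence the equivalences), whereas the paper phrases the same computation as a parity argument and treats only case (1).
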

\begin{proof}
	Due to the similarity, we only prove (1).
Combining Lemma~\ref{lem:8}(1) with our assumption $p\geq 11$ and that $d_k^{\ur}$ is odd in this case, we have $\left\lfloor\frac{A-t_{1}}{p+1}\right\rfloor+\left\lfloor\frac{A-t_{2}}{p+1}\right\rfloor$ is odd. 
Combined with $t_1\leq t_2-1$ from Lemma~\ref{eqr11}(2), this implies that $A \leqslant t_2-1$.
\end{proof}

\begin{notation-definition}[\cite{xiao}, Definition~5.11]\label{notation1}\noindent

(1) For every $k\in \calK$ and $w_\star\in\bfm_{\CC_p}$, we denote by $L_{w_{\star}, k}$ the largest number (if exists) in $\left\{1, \ldots, \frac{1}{2} d_{k}^{\new}\right\}$ such that
$$
v_{p}\left(w_{\star}-w_{k}\right) \geq \OD_{k, L_{w_{\star}, k}}-\OD_{k, L_{w_{\star}, k}-1}.
$$

(2) When such $L_{w_{\star}, k}$ exists, the open interval $\nS_{w_{\star}, k}:=\left(\frac{1}{2} d_{k}^{\Iw}-L_{w_{\star}, k}, \frac{1}{2} d_{k}^{\mathrm{Iw}}+L_{w_{\star}, k}\right)$ is called the \emph{near-Steinberg range} for the pair $\left(w_{\star}, k\right)$. We put $\overline{\nS}_{w_{\star}, k}:=\big[\frac{1}{2} d_{k}^{\Iw}-L_{w_{\star}, k}, \frac{1}{2} d_{k}^{\mathrm{Iw}}+L_{w_{\star}, k}\big]$ its closure.

(3) By \cite[Theorem~5.19(1)]{xiao}, for a fixed $w_\star\in \bfm_{\CC_p}$, the set of near-Steinberg ranges $\{\nS_{w_{\star}, k}\;|\;k\in\calK \}$ is nested; that is, for any two such open intervals, either they are disjoint or one is contained within the other. A near-Steinberg range $\nS_{w_{\star}, k}$ is said to be \emph{maximal} if it is not contained within any other ranges.
\end{notation-definition}

\begin{proposition}[\cite{xiao}, Proposition~5.16(1) and Theorem~5.19(2)]\label{xiao:pro}
	For any $w_\star\in \bfm_{\CC_p}$, we have the following.  
	\begin{enumerate}
		\item   For any distinct weights $k, k^{\prime}\in \calK$ with $v_{p}\left(w_{k^{\prime}}-w_{k}\right) \geq \OD_{k, L_{w_{\star}, k}}-\OD_{k, L_{w_{\star}, k}-1}$ the following exclusions must be observed:
		$$
		\tfrac{1}{2} d_{k^{\prime}}^{\mathrm{Iw}} \notin \overline{\mathrm{nS}}_{w_{\star}, k} \quad \text { and } \quad d_{k^{\prime}}^{\mathrm{ur}}, d_{k^{\prime}}^{\mathrm{Iw}}-d_{k^{\prime}}^{\mathrm{ur}} \notin \mathrm{nS}_{w_{\star}, k} .
		$$
		\item For any pair of integers $0\leq n'<n''$, the following statements are equivalent:
		\begin{enumerate}
			\item 
			$n'$ and $n^{\prime \prime}$ are $x$-coordinates of vertices at the end of a line segment in $\NP(G(w_{\star},-)).$
			\item There exists a weight $k\in \calK$ such that $\nS_{w_{\star},k}$ is maximal and equal to $(n',n'')$.
		\end{enumerate} 
	\end{enumerate}
\end{proposition}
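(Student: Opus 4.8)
The plan is to treat both assertions as combinatorial facts about the ghost exponents of Definition~\ref{re:2}(3) and the polygons $\OD_k$; since the statement is quoted from \cite{xiao}, the formal justification is the citation to \cite[Proposition~5.16(1) and Theorem~5.19(2)]{xiao}, and what follows only indicates the shape of that argument. Fix $w_\star\in\bfm_{\CC_p}$. From $g_n(w)=\prod_{k\in\calK}(w-w_k)^{m_n(k)}$ one gets $\v(g_n(w_\star))=\sum_{k\in\calK}m_n(k)\,\v(w_\star-w_k)$, so $\NP(G(w_\star,-))$ is the lower convex hull of a superposition, over all weights $k\in\calK$, of ``tents'' $n\mapsto m_n(k)$ scaled by $\v(w_\star-w_k)$. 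The $k$-th tent is supported on $(d_k^{\ur},\,d_k^{\Iw}-d_k^{\ur})$, symmetric about its apex $n=\tfrac12 d_k^{\Iw}$ with height $\tfrac12 d_k^{\new}$ there (so $m_n(k)=\tfrac12 d_k^{\new}-|\ell|$ for $n=\tfrac12 d_k^{\Iw}+\ell$, $|\ell|\le\tfrac12 d_k^{\new}$), and its second differences are $+1$ at the ``toes'' $d_k^{\ur}$ and $d_k^{\Iw}-d_k^{\ur}$, negative at the apex, and $0$ elsewhere. The one preliminary I would record is that $\OD_{k,\ell}$ equals, after the normalization $-\tfrac{k-2}{2}\ell$ already present in $\Delta'_{k,\ell}$ (Notation~\ref{notation:2}(3)), the valuation at $w=w_k$ of the $(w-w_k)$-stripped coefficient $g_{\frac12 d_k^{\Iw}+\ell,\hat k}$, which is \cite[Proposition~4.18]{xiao}; this is what makes $\OD_k$ control $\NP(G(w_\star,-))$ in a neighbourhood of $n=\tfrac12 d_k^{\Iw}$.

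For Part (1), put $L:=L_{w_\star,k}$, so the hypothesis is $\v(w_{k'}-w_k)\ge\OD_{k,L}-\OD_{k,L-1}$. First I would use the nonarchimedean triangle inequality to get $\v(w_\star-w_{k'})\ge\min\{\v(w_\star-w_k),\,\OD_{k,L}-\OD_{k,L-1}\}$, so that $L_{w_\star,k'}$ exists and is $\ge L$, i.e. the $k'$-tent is ``aligned'' with the $k$-tent over $\overline{\nS}_{w_\star,k}$. If $\tfrac12 d_{k'}^{\Iw}\in\overline{\nS}_{w_\star,k}$, i.e. $|d_{k'}^{\Iw}-d_k^{\Iw}|\le 2L$, then Lemma~\ref{lem:8}(1) bounds $|k_\bullet-k'_\bullet|$ by $2L\le d_k^{\new}$, and Lemma~\ref{lem:8}(3) together with Corollary~\ref{coro:1} constrain $d_{k'}^{\ur}$ relative to $d_k^{\ur}$ so tightly that the apex of the $k'$-tent would sit strictly inside the segment cut out by $L$ and contribute there an uncompensated negative second difference to $n\mapsto\v(g_n(w_\star))$, contradicting the maximality in the definition of $L_{w_\star,k}$; running the same argument at a toe of the $k'$-tent (an uncompensated $+1$) gives $d_{k'}^{\ur},\,d_{k'}^{\Iw}-d_{k'}^{\ur}\notin\nS_{w_\star,k}$.

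Part (2) is local on $\NP(G(w_\star,-))$. For (b)$\Rightarrow$(a): given a maximal $\nS_{w_\star,k}=(n',n'')$, Part (1) forbids any relevant weight from placing a toe in $(n',n'')$ or an apex in $[n',n'']$, and the nestedness of Notation-Definition~\ref{notation1}(3) together with maximality forces every near-Steinberg range meeting $(n',n'')$ to lie inside it; a direct computation of $\v(g_n(w_\star))$ from the tent decomposition, using the normalization of $\Delta'_{k,\ell}$, the symmetry \eqref{neq::19} and the convexity of $\OD_k$, then places the points $(n,\v(g_n(w_\star)))$ for $n'\le n\le n''$ on or above the chord through the endpoints (of slope $\tfrac{k-2}{2}$), while the failure of the defining inequality at $L+1$ makes $n'$ and $n''$ genuine vertices. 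For (a)$\Rightarrow$(b): if $[n',n'']$ is an edge of length $\ge 2$, the crucial point is that its midpoint $\tfrac12(n'+n'')$ must be $\tfrac12 d_k^{\Iw}$ for some $k\in\calK$ --- only a tent whose apex sits at the midpoint can simultaneously make $n'$ and $n''$ vertices and suppress every vertex strictly between them, a rigidity that rests on the exclusions of Part (1) and the symmetry \eqref{neq::19} --- after which comparing $\v(w_\star-w_k)$ against the successive slopes of $\OD_k$ recovers $L_{w_\star,k}=\tfrac12(n''-n')$ and the maximality of $\nS_{w_\star,k}$.

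The main obstacle is exactly this ``no conspiracy'' step, shared by Part (1) and the (a)$\Rightarrow$(b) direction of Part (2): ruling out that several weights' tents combine to flatten a long interval that is not centred at a single apex, or to shift an apex-centred one. This is where the arithmetic of $d_k^{\ur},d_k^{\Iw}$ in Lemma~\ref{lem:8}(1), the collision bound Lemma~\ref{lem:8}(3), Corollary~\ref{coro:1}, and the symmetry \eqref{neq::19} all enter together, and it is the computation carried out in detail in \cite{xiao}.
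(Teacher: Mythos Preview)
The paper does not prove this proposition at all: it is stated with the attribution \cite[Proposition~5.16(1) and Theorem~5.19(2)]{xiao} and no argument is given, the text moving directly to Lemma~\ref{lem:1}. You correctly identify that the formal justification here is the citation, so your proposal is consistent with the paper's treatment; the heuristic ``tent'' sketch you supply is additional exposition that the paper does not attempt, and its accuracy would have to be checked against \cite{xiao} rather than against the present paper.
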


\begin{lemma}\label{lem:1}
 For any $k,k'\in \calK$, if $L_{w_{k'},k}\geq 1$, then $\v(k-k')\geq 1$.
\end{lemma}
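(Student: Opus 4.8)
The claim is a contrapositive statement about when the auxiliary quantity $L_{w_{k'},k}$ can be at least $1$: if $v_p(k-k')=0$, then no such $L$ exists. The plan is to argue by contradiction. Suppose $L_{w_{k'},k}\geq 1$ but $v_p(k-k')=0$. By the defining property in Notation-Definition~\ref{notation1}(1), $L_{w_{k'},k}\geq 1$ forces
$$v_p(w_{k'}-w_k)\geq \OD_{k,L_{w_{k'},k}}-\OD_{k,L_{w_{k'},k}-1},$$
and since this holds for $L=L_{w_{k'},k}$ it holds a fortiori for $L=1$ (the increments of a convex hull are nondecreasing), i.e. $v_p(w_{k'}-w_k)\geq \OD_{k,1}-\OD_{k,0}$. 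So it suffices to show this inequality is impossible when $v_p(k-k')=0$: I would compare a lower bound for $\OD_{k,1}-\OD_{k,0}$ coming from the symmetry relation \eqref{neq::19} and the shape of the $\Delta'_{k,\ell}$ with the fact that $v_p(w_{k'}-w_k)=v_p(k-k')=0$ when $k\equiv k'\pmod{p-1}$ but $p\nmid(k-k')$ — recall $w_k=\exp(p(k-2))-1$, so $v_p(w_{k'}-w_k)=1+v_p(k-k')$, which is exactly $1$ when $v_p(k-k')=0$.

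The substance therefore reduces to showing $\OD_{k,1}-\OD_{k,0}\geq 2$ (or more precisely, strictly greater than the value $v_p(w_{k'}-w_k)$ would take, which is $\geq 1$; I should double-check whether $v_p(w_{k'}-w_k)=1$ or could be larger here — in the regime $k\equiv k'\pmod{p-1}$, $v_p(w_{k'}-w_k)=1+v_p(k_\bullet-k'_\bullet)$, and $v_p(k-k')=0$ means $v_p(k_\bullet - k'_\bullet)=0$ as well since $p-1$ is a unit, giving $v_p(w_{k'}-w_k)=1$). I would extract the estimate $\OD_{k,1}-\OD_{k,0}\geq 2$ from the explicit description of $\Delta'_{k,\ell}$ in Notation~\ref{notation:2}(3) together with the results of \cite{xiao} quoted around \eqref{neq::19}; this is essentially the statement that the first slope of the convex hull $\OD_k$ is bounded below, which should follow because $\Delta'_{k,0}$ and $\Delta'_{k,1}$ differ by a controlled amount governed by the $\theta_n,\eta_n$ data of Notation~\ref{notation:2}(7), and \eqref{eqr10} gives $\theta_n\leq p-3$. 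Alternatively, and perhaps more cleanly, I would invoke Proposition~\ref{xiao:pro}(1): if $L_{w_{k'},k}\geq 1$ with $v_p(w_{k'}-w_k)\geq \OD_{k,L_{w_{k'},k}}-\OD_{k,L_{w_{k'},k}-1}$, then $d_{k'}^{\ur}$ and $d_{k'}^{\Iw}-d_{k'}^{\ur}$ must avoid $\nS_{w_{k'},k}=(\tfrac12 d_k^{\Iw}-L,\tfrac12 d_k^{\Iw}+L)$ and $\tfrac12 d_{k'}^{\Iw}\notin\overline{\nS}_{w_{k'},k}$; then combine with Lemma~\ref{lem:8}(1)--(3) to see that when $v_p(k-k')=0$ the quantities $d_k^{\ur},d_{k'}^{\ur},d_k^{\Iw},d_{k'}^{\Iw}$ are forced so close together (Lemma~\ref{lem:8}(3) is precisely a $v_p(k-k')=0$ criterion) that these exclusions cannot all hold, a contradiction.

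The main obstacle I anticipate is getting the quantitative lower bound on $\OD_{k,1}-\OD_{k,0}$ sharp enough: I need it to beat $v_p(w_{k'}-w_k)=1$, so a bound of the form $\OD_{k,1}-\OD_{k,0}\geq \tfrac{k-2}{2}$ or at least $\geq 2$ suffices, and for weights $k\geq 2$ with $k\equiv k_\ve\pmod{p-1}$ and $k$ not among the very smallest this is comfortable, but I should be careful about small $k$ (e.g. $k=2$, where $d_k^\new$ may be $0$ and $L_{w_{k'},k}$ is simply undefined, making the statement vacuous). I would handle the degenerate small-weight cases separately at the start, noting that if $d_k^\new=0$ then $\{1,\dots,\tfrac12 d_k^\new\}=\emptyset$ so $L_{w_{k'},k}$ does not exist and the implication is vacuously true. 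For the generic case I will lean on the route via Proposition~\ref{xiao:pro}(1) and Lemma~\ref{lem:8}, since that avoids recomputing the convex hull $\OD_k$ from scratch and instead repackages the $v_p(k-k')=0$ hypothesis through the already-established rigidity of the invariants $d_k^{\ur}, d_k^{\Iw}$.
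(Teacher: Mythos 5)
Your overall skeleton is the same as the paper's: reduce $L_{w_{k'},k}\geq 1$ to the inequality $v_p(w_{k'}-w_k)\geq \OD_{k,1}-\OD_{k,0}$, note $v_p(w_{k'}-w_k)=1+v_p(k-k')$, and win by lower-bounding the first hull increment. But the quantitative heart is missing. The paper's proof rests on the explicit estimate from \cite[Lemma~5.2]{xiao}, namely $\Delta'_{k,\ell}-\Delta'_{k,\ell-1}\geq \tfrac32+\tfrac{p-1}{2}(\ell-1)$ for all $\ell=1,\dots,\tfrac12 d_k^{\new}$, which combined with the symmetry \eqref{neq::19} gives $\bigl(\Delta'_{k,\ell}-\Delta'_{k,0}\bigr)/\ell\geq \tfrac32$ for all such $\ell$ (on both sides of $0$), and hence $\OD_{k,1}-\OD_{k,0}\geq \tfrac32$; then $v_p(k-k')+1\geq \tfrac32$ plus integrality of $v_p(k-k')$ finishes it. You never establish any lower bound: you say you would ``extract'' it from the $\theta_n,\eta_n$ data and \eqref{eqr10}, but $\theta_n\leq p-3$ is an upper bound used elsewhere (Lemma~\ref{lemma3}, Proposition~\ref{lemma2}) and does not yield the needed lower bound on $\Delta'_{k,1}-\Delta'_{k,0}$; nor do you address passing from the raw points $\Delta'_{k,\ell}$ to the hull values $\OD_{k,\ell}$, which is exactly where \eqref{neq::19} is needed. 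Also note that a bound strictly greater than $1$ (e.g.\ $\tfrac32$) already suffices; insisting on $\geq 2$ is unnecessary and in fact the clean statement available in the literature is the $\tfrac32$ bound.

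Your proposed ``cleaner'' alternative via Proposition~\ref{xiao:pro}(1) and Lemma~\ref{lem:8} does not work. Proposition~\ref{xiao:pro}(1) only asserts exclusions ($\tfrac12 d_{k'}^{\Iw}\notin\overline{\nS}_{w_{k'},k}$, $d_{k'}^{\ur},d_{k'}^{\Iw}-d_{k'}^{\ur}\notin\nS_{w_{k'},k}$) as consequences of the valuation hypothesis; these exclusions are perfectly consistent with $v_p(k-k')=0$ (take $k'$ whose invariants lie far outside the interval), so no contradiction arises from them. Moreover Lemma~\ref{lem:8}(3) goes in the wrong direction for your purpose: it deduces $v_p(k-k')=0$ from equality of the invariants, whereas you would need $v_p(k-k')=0$ to force the invariants into the near-Steinberg range, which is false. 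So that route is a dead end, and the only viable argument is the quantitative one you left unproved.
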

\begin{proof}
	By   \cite[Lemma~5.2]{xiao}, for any $k\in \calK$ and $\ell=1, \ldots, \frac{1}{2} d_{k}^{\new}$, we have
	$$
	\Delta'_{k, \ell}-\Delta'_{k, \ell-1}\geq \frac{3}{2}+\frac{p-1}{2}(\ell-1).
	$$
	Combined with \eqref{neq::19}, this system of inequalities  implies that for any $\ell=1, \ldots, \frac{1}{2} d_{k}^{\new},$ we have
$$	\frac{\Delta'_{k,\ell}-\Delta'_{k,0}}{\ell}=\frac{\Delta'_{k,-\ell}-\Delta'_{k,0}}{\ell}\geq \frac{3}{2},$$ and hence 
$	\OD_{k,1}-\OD_{k,0}\geq \frac{3}{2}$.
	From our assumption $L_{w_{k_1},k}\geq 1$, we have
	\begin{equation*}
			\v(k-k')+1=\v(w_k-w_{k'})\geq
	\OD_{k,1}-\OD_{k,0}\geq \frac{3}{2}.
	\end{equation*}
Note that 	$\v(k-k')$ is an integer, and hence has to be greater than or equal to $1.$
\end{proof}
%

\begin{lemma}\label{lemma3}
 For any $k\in \calK$, let $n=d_k^\ur$. Then
 	\begin{equation*}
 			\frac{k-2}{2}-\frac{(p-1)(\frac12d^\new_k-1)+\theta_n}{2}
 	\geq	\frac{p-1}{p+1}k_\bullet-	 \begin{cases}
 		2& \textrm{if~} n \textrm{~is odd},\\
 			1& \textrm{if~} n \textrm{~is even}.\\
 		\end{cases}
 	\end{equation*}
\end{lemma}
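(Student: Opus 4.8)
The plan is to reduce everything to explicit formulas from Notation~\ref{notation:2} and Lemma~\ref{lem:8}. First I would write $k = k_\ve + k_\bullet(p-1)$, so that $\frac{k-2}{2} = \frac{(p-1)k_\bullet + k_\ve - 2}{2}$, and note $0 \le k_\ve - 2 \le p-2$. The quantity to bound on the left is then
\[
\frac{(p-1)k_\bullet + k_\ve - 2}{2} - \frac{(p-1)\big(\tfrac12 d_k^\new - 1\big) + \theta_n}{2},
\]
so the key is to control $d_k^\new = d_k^\Iw - 2 d_k^\ur$ in terms of $k_\bullet$. By Lemma~\ref{lem:8}(1) we have $d_k^\Iw = 2k_\bullet + 2 - 2\delta$ and $d_k^\ur = \lfloor \tfrac{k_\bullet - t_1}{p+1}\rfloor + \lfloor \tfrac{k_\bullet - t_2}{p+1}\rfloor + 2$, hence $\tfrac12 d_k^\new = k_\bullet + 1 - \delta - d_k^\ur$. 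Substituting, the left-hand side becomes, after the $(p-1)$-terms telescope,
\[
\frac{(p-1)\big(d_k^\ur + \delta\big) + k_\ve - 2 - \theta_n}{2},
\]
and the target inequality is equivalent to
\[
\frac{(p-1)(d_k^\ur + \delta) + k_\ve - 2 - \theta_n}{2} \;\ge\; \frac{p-1}{p+1}k_\bullet -
\begin{cases} 2 & n \text{ odd},\\ 1 & n \text{ even}.\end{cases}
\]

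The next step is to feed in the lower bound $d_k^\ur \ge \frac{2k_\bullet}{p+1} - 2$ from \eqref{eqw1}, which gives $\frac{p-1}{2}d_k^\ur \ge \frac{p-1}{p+1}k_\bullet - (p-1)$. So it suffices to show
\[
\frac{(p-1)\delta + k_\ve - 2 - \theta_n}{2} - (p-1) \;\ge\; -
\begin{cases} 2 & n \text{ odd},\\ 1 & n \text{ even},\end{cases}
\]
i.e. $(p-1)\delta + k_\ve - 2 - \theta_n \ge 2(p-1) - 2(\text{$2$ or $1$})$. Here I would use $\theta_n \le p-3$ from \eqref{eqr10}, $k_\ve \ge 2$, and split on the parity of $n$; the worst case (smallest left side) is $\delta = 0$, $k_\ve = 2$, $\theta_n = p-3$, giving left side $\ge -( p-3)$, against required $\ge 2(p-1) - 4 = 2p - 6$ or $2p-8$ — which is \emph{false} as stated, so the naive bound \eqref{eqw1} is too lossy. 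The actual argument must instead use the sharper divisibility/parity information: by Corollary~\ref{coro:1}, the parity of $d_k^\ur$ is pinned down by the residue $A$ of $k_\bullet$ modulo $p+1$, and when $n = d_k^\ur$ is odd one has $A \le t_2 - 1$ while when $n$ is even one has $t_2 \le A$, which improves the floor estimates for $d_k^\ur$ by essentially $+1$ (replacing the $-2$ in \eqref{eqw1} by $-1$ in the relevant parity case and correlating it with the $\theta_n \in \{a+2, p-1-a\}$ dichotomy via Lemma~\ref{eqr11}(1)). So the refined plan is: write $k_\bullet = A + B(p+1)$ as in Corollary~\ref{coro:1}, compute $d_k^\ur = 2B + 2 + \lfloor \tfrac{A - t_1}{p+1}\rfloor + \lfloor\tfrac{A-t_2}{p+1}\rfloor$ exactly (the two floors are each $0$ or $-1$ depending on how $A$ compares to $t_1, t_2$, using $t_1 \le A \le t_1 + p$ and $4 \le t_2 - t_1 \le p-3$), and then in each of the four cases (parity of $n$ crossed with the two rows of the $\theta_n$ table / the $a+s \lessgtr p-1$ dichotomy) verify the inequality directly as a linear inequality in $B$ and the bounded quantities $A, k_\ve, t_1, t_2, \delta, \theta_n$, all of whose ranges are explicitly known from Lemma~\ref{eqr11} and \eqref{eqr10}.

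The main obstacle I expect is the bookkeeping: one must carry the exact value of $d_k^\ur$ (not just the $\pm 2$ bracket) together with the exact value of $\theta_n$ in each parity case, and check that the "$+1$" improvement from Corollary~\ref{coro:1} exactly compensates the loss, with no slack to spare — the appearance of the sharp constants $2$ (odd) versus $1$ (even) on the right-hand side strongly suggests the inequality is tight, so a sloppy estimate anywhere will fail. Concretely, after reducing to $\tfrac{p-1}{2}(d_k^\ur + \delta) \ge \tfrac{p-1}{p+1}k_\bullet + \tfrac{\theta_n - k_\ve + 2}{2} - (\text{$2$ or $1$})$, I would bound $\tfrac{p-1}{p+1}k_\bullet = \tfrac{p-1}{p+1}(A + B(p+1)) = (p-1)B + \tfrac{(p-1)A}{p+1}$ and match it against $\tfrac{p-1}{2}(2B + 2 + \varepsilon_1 + \varepsilon_2 + \delta)$ where $\varepsilon_i \in \{0,-1\}$ are the two floor values, reducing to the $B$-free inequality $\tfrac{p-1}{2}(2 + \varepsilon_1 + \varepsilon_2 + \delta) + (\text{$2$ or $1$}) \ge \tfrac{(p-1)A}{p+1} + \tfrac{\theta_n - k_\ve + 2}{2}$, and finally checking this finite case analysis using $A \le t_1 + p \le$ (explicit bound from Lemma~\ref{eqr11}(1)) and the correlation between the $\varepsilon_i$, the parity of $n$, and whether $a + s < p-1$.
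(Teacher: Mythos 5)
Your refined plan is essentially the paper's own proof: after the same exact simplification of the left-hand side to $\tfrac{s+\{a+s\}}{2}+\tfrac{p-1}{2}n-\tfrac{\theta_n}{2}$, the paper also writes $k_\bullet=A+B(p+1)$, invokes Corollary~\ref{coro:1} to get $A\le t_2-1$ (so $n=2B+1$) when $n$ is odd and $A\ge t_2$ (so $n=2B+2$) when $n$ is even, plugs in $\theta_n=t_1-t_2+p+1$ resp.\ $t_2-t_1$, and closes with Lemma~\ref{eqr11}(1), exactly as you propose; your preliminary observation that the crude bound \eqref{eqw1} is too lossy is a correct sanity check, and the only slip (harmless) is that $\lfloor\tfrac{A-t_1}{p+1}\rfloor$ is always $0$ under the normalization $t_1\le A\le t_1+p$, so only the second floor carries the parity information.
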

\begin{proof}
	By Lemma~\ref{lem:8}(1),  we have 
	\begin{equation*}
		\begin{split}
			&	\frac{k-2}{2}-\frac{(p-1)(\frac12d^\new_k-1)+\theta_n}{2}\\
			=&\frac{\{a+2s\}+k_\bullet(p-1)}{2}-\frac{p-1}{2}(k_\bullet-\delta-n)-\frac{\theta_n}{2}\\
			=&\frac{s+\{a+s\}}{2}+\frac{p-1}{2}\cdot n-\frac{\theta_n}{2}.
		\end{split}
	\end{equation*}
Write $k_\bullet=A+B(p+1)$ with $t_1 \leq A\leq t_1+p$ and $B\in \ZZ$.

When $n$ is odd, by Corollary~\ref{coro:1}(1), we have $t_1\leq A\leq t_2-1.$
Combined with Lemma~\ref{lem:8}(1), this chain of inequalities implies that 
$$n=\left\lfloor\frac{k_\bullet-t_{1}}{p+1}\right\rfloor+\left\lfloor\frac{k_\bullet-t_{2}}{p+1}\right\rfloor+2=2B+1=\frac{2(k_\bullet-A)}{p+1}+1\geq \frac{2(k_\bullet-t_2+1)}{p+1}+1.$$

By  Notation~\ref{notation:2}(7), we have
$$\theta_n=\beta_0-\beta_1+\frac{p+1}{2}=t_1-t_2+p+1,$$
and hence
\begin{align*}
	&	\frac{s+\{a+s\}}{2}+\frac{p-1}{2}\cdot n-\frac{\theta_n}{2}\\
	\geq& 
	\frac{s+\{a+s\}}{2}+\frac{p-1}{2}\left(\frac{2(k_\bullet-t_2+1)}{p+1}+1\right)-\frac{t_1-t_2+p+1}{2}\\
	=& \frac{p-1}{p+1}k_\bullet+\frac{2t_2}{p+1}-\frac{p+3}{p+1}- \delta\\
	\geq & \frac{p-1}{p+1}k_\bullet-2,
\end{align*}
where the last to second equality follows from Lemma~\ref{eqr11}.

When $n$ is even, by Corollary~\ref{coro:1}(2), we have
$ t_2\leq A\leq t_1+p.$
Combined with Lemma~\ref{lem:8}(1), this chain of inequalities implies that 
$$d_k^\ur=\left\lfloor\frac{k_\bullet-t_{1}}{p+1}\right\rfloor+\left\lfloor\frac{k_\bullet-t_{2}}{p+1}\right\rfloor+2=2B+2=\frac{2(k_\bullet-A)}{p+1}+2\geq \frac{2(k_\bullet-t_1-p)}{p+1}+2.$$

By  Notation~\ref{notation:2}(7), we have
$$\theta_n=\beta_1-\beta_0+\frac{p+1}{2}=t_2-t_1,$$
and hence
\begin{align*}
	&	\frac{s+\{a+s\}}{2}+\frac{p-1}{2}\cdot n-\frac{\theta_n}{2}\\
	\geq& 
	\frac{s+\{a+s\}}{2}+\frac{p-1}{2}\left(\frac{2(k_\bullet-t_1+1)}{p+1}\right)-\frac{t_2-t_1}{2}\\
	=& \frac{p-1}{p+1}k_\bullet+\frac{2(t_1-1)}{p+1}- \delta\\
	\geq & \frac{p-1}{p+1}k_\bullet-1,
\end{align*}
where the last to second equality follows from Lemma~\ref{eqr11}(1).
\end{proof}

\begin{proposition}\label{lemma2}
	For any $k\in \calK$ with $\frac12d_{k}^\new\geq 1$, we have $$\frac{k-2}{2}\geq \OD_{k,\frac12d_{k}^\new}-\OD_{k,\frac12d_{k}^\new-1}-4.$$
\end{proposition}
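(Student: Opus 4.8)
The plan is to compare the single ``last slope'' increment $\OD_{k,\frac12 d_k^\new}-\OD_{k,\frac12 d_k^\new-1}$ against the quantity $\tfrac{k-2}{2}$ by unwinding the definition of $\Delta'_{k,\ell}$ in Notation~\ref{notation:2}(3) and using the symmetry \eqref{neq::19}. Writing $L=\tfrac12 d_k^\new$, since $(\ell,\OD_{k,\ell})$ is the lower convex hull of the points $(\ell,\Delta'_{k,\ell})$, the final edge slope is bounded above by the slope of the chord from the vertex at $\ell=L-1$ region up to $\ell=L$, and in fact is at most $\Delta'_{k,L}-\Delta'_{k,L-1}$ only if $\Delta'_{k,L-1}$ is itself a vertex; more robustly, $\OD_{k,L}-\OD_{k,L-1}\leq \Delta'_{k,L}-\OD_{k,L-1}\leq \Delta'_{k,L}-\Delta'_{k,0}+(\OD_{k,0}-\Delta'_{k,0})$, but the cleanest route is: the last slope of a lower convex hull is at most $\Delta'_{k,L}-\Delta'_{k,L-1}$ is false in general, so instead I will bound $\OD_{k,L}-\OD_{k,L-1}\le \Delta'_{k,L}-\Delta'_{k,L-1}$ is replaced by $\OD_{k,L}-\OD_{k,L-1} \le \frac{\Delta'_{k,L}-\Delta'_{k,0}}{1}$ only when $L=1$. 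Let me restate the intended mechanism: the last slope of $\OD_k$ equals $\min_{0\le j\le L-1}\frac{\Delta'_{k,L}-\Delta'_{k,j}}{L-j}$, so in particular $\OD_{k,L}-\OD_{k,L-1}\le \Delta'_{k,L}-\Delta'_{k,L-1}$ is wrong; rather $\OD_{k,L}-\OD_{k,L-1}\le \frac{\Delta'_{k,L}-\Delta'_{k,L-1}}{1}$ is the $j=L-1$ term, which is indeed an upper bound. So that inequality IS correct. Hence it suffices to show
\[
\tfrac{k-2}{2}\ \ge\ \big(\Delta'_{k,L}-\Delta'_{k,L-1}\big)-4,
\]
with $L=\tfrac12 d_k^\new$.

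Next I would expand $\Delta'_{k,L}-\Delta'_{k,L-1} = v_p\!\big(g_{\frac12 d_k^\Iw+L,\hat k}(w_k)\big) - v_p\!\big(g_{\frac12 d_k^\Iw+L-1,\hat k}(w_k)\big) - \tfrac{k-2}{2}$, so the target inequality becomes
\[
(k-2)\ \ge\ v_p\!\big(g_{\frac12 d_k^\Iw+L,\hat k}(w_k)\big) - v_p\!\big(g_{\frac12 d_k^\Iw+L-1,\hat k}(w_k)\big) - 4,
\]
i.e. a bound on a single consecutive increment of the valuations of the ghost coefficients at $w_k$ near the right-hand edge. Here $\tfrac12 d_k^\Iw+L = d_k^\Iw - d_k^\ur$, the largest index with $m_n^{(\ve)}(k)\neq 0$; I expect this increment to be controlled by \cite[Lemma~5.2]{xiao} (the same estimate already invoked in Lemma~\ref{lem:1}) together with the table giving $\theta_n$, which is exactly why the statement is phrased with the error term $-4$ rather than $0$. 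The natural path is to relate $v_p(g_{\frac12 d_k^\Iw+L-1,\hat k}(w_k))-v_p(g_{\frac12 d_k^\Iw+L,\hat k}(w_k))$ — note the reversed sign, since moving the index toward the center increases the multiplicity pattern — to $\tfrac{(p-1)(\frac12 d_k^\new-1)+\theta_n}{2}$ with $n=d_k^\ur$, and then invoke Lemma~\ref{lemma3}, which gives precisely
\[
\tfrac{k-2}{2}-\tfrac{(p-1)(\frac12 d_k^\new-1)+\theta_n}{2}\ \ge\ \tfrac{p-1}{p+1}k_\bullet - 2\ \ge\ -2,
\]
comfortably within the allowed slack of $4$.

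So the key steps, in order, are: (i) reduce to the single-edge estimate $\OD_{k,L}-\OD_{k,L-1}\le \Delta'_{k,L}-\Delta'_{k,L-1}$ using that the last hull slope is a minimum over chord slopes; (ii) substitute the definition of $\Delta'_{k,\ell}$ to rewrite everything in terms of $v_p(g_{\frac12 d_k^\Iw+\ell,\hat k}(w_k))$; (iii) identify the relevant index as $d_k^\Iw-d_k^\ur$ and use the explicit description of the ghost coefficient's $(w-w_k)$-multiplicity pattern from Definition~\ref{re:2}(3) together with \cite[Lemma~5.2]{xiao} and \cite[Proposition~4.18]{xiao} to bound the valuation increment by $\tfrac{(p-1)(\frac12 d_k^\new-1)+\theta_n}{2}$ for $n=d_k^\ur$; (iv) conclude by Lemma~\ref{lemma3}, noting that $k_\bullet\ge 0$ makes the right-hand side $\ge -2 > -4$. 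The main obstacle I anticipate is step (iii): correctly tracking the valuation of the ghost coefficient near the edge $d_k^\Iw-d_k^\ur$ — in particular pinning down which weight $k'$ contributes the governing $(w_k-w_{k'})$-factor and verifying, via \cite[Lemma~5.2]{xiao} and the table for $\theta_n$ in Notation~\ref{notation:2}(7)–(8), that the increment is captured exactly by the expression appearing in Lemma~\ref{lemma3}. Everything else is bookkeeping; the $-4$ is deliberately generous to absorb any off-by-a-bounded-amount slack in this edge estimate, consistent with the remark after Theorem~\ref{thm2} that a sharper bound here would tighten the final constant.
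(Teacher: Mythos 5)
Your reduction in step (i) rests on a false convexity claim, and the inequality you need goes the wrong way. Write $D:=\tfrac12 d_k^\new$. Since $(D,\Delta'_{k,D})$ is the right endpoint of the lower convex hull, we have $\OD_{k,D}=\Delta'_{k,D}$ while $\OD_{k,D-1}\leq \Delta'_{k,D-1}$, hence
\begin{equation*}
\OD_{k,D}-\OD_{k,D-1}\ \geq\ \Delta'_{k,D}-\Delta'_{k,D-1},
\end{equation*}
with strict inequality whenever $\ell=D-1$ is not a vertex of the hull; equivalently, the last slope of a lower convex hull is the \emph{maximum}, not the minimum, of the chord slopes $\frac{\Delta'_{k,D}-\Delta'_{k,j}}{D-j}$ over $j<D$ (for the points $(0,0),(1,10),(2,11)$ the last hull slope is $11/2$, not $1$). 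So bounding $\Delta'_{k,D}-\Delta'_{k,D-1}$ alone does not bound the hull increment, and your ``it suffices to show'' step collapses. This is exactly the difficulty the paper must handle: it splits $\OD_{k,D}-\OD_{k,D-1}=(\Delta'_{k,D}-\Delta'_{k,D-1})+(\Delta'_{k,D-1}-\OD_{k,D-1})$ and controls the second, nonnegative, discrepancy by $3(\log_p D)^2$ via \cite[Lemma~5.8]{xiao} (inequality \eqref{eqw3}); no such control appears in your argument.

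Even for the first term your step (iii) is too optimistic: the estimate extracted from the proof of \cite[Lemma~5.5]{xiao} is not $\frac{(p-1)(D-1)+\theta_n}{2}$ but that quantity plus $2+\gamma+\lfloor\log_p D\rfloor$, where $\gamma\leq \log_p(pk_\bullet)$ by \cite[(5.2.5)]{xiao}. After invoking Lemma~\ref{lemma3} the leftover is therefore $\frac{p-1}{p+1}k_\bullet-2-\bigl(2+\log_p(pk_\bullet)+\lfloor\log_p D\rfloor+3(\log_p D)^2\bigr)$, and one needs the linear growth in $k_\bullet$ to absorb the logarithmic terms; the margin is not ``comfortable'': the paper's numerical verification only yields $\geq -4.417$ for $k_\bullet\in\{1,2\}$ and must be upgraded to $\geq -4$ by observing that $\frac{k-2}{2}-(\OD_{k,D}-\OD_{k,D-1})$ lies in $\frac12\ZZ$ in those cases, while the case $k_\bullet=0$ (where $D=1$ and the hull value equals the point value) is treated separately and directly from the definition of $\Delta'_{k,\pm 1}$. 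None of this machinery is present in your sketch, so as written the proposal does not prove the proposition.
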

\begin{proof}
	We introduce temporary notations $D:=\frac12d_{k}^\new$ and $n:=d_k^\ur$.

	For the case $k_\bullet=0$, by Lemma~\ref{lem:8}(1), we have $d_k^\Iw\leq 2$, and hence
$	D\leq 1$. Combining with the assumption in this lemma, we must have 
$	\frac12d_{k}^\Iw=D=1$, $n=\delta=0$. 
By \eqref{neq::19}, we have
\begin{equation}\label{eqt:1}
		\Delta'_{k, 1}-\Delta'_{k, 0}=\Delta'_{k, -1}-\Delta'_{k, 0}=\frac{k-2}{2}-\sum_{k'\in \calK\backslash\{k\} \textrm{~s.t. } k'_\bullet<t_1} v_p(w_k-w_{k'})\leq \frac{k-2}{2}.
\end{equation}

By \cite[Lemma~5.2]{xiao}, we have
$$\OD_{k,1}-\OD_{k,0}=\OD'_{k,1}-\OD'_{k,0}. $$
Combined with \eqref{eqt:1}, this equality completes the proof of this case.

	Now we assume that $k_\bullet\geq 1.$
	We first note that it is actually showed in the proof of \cite[Lemma~5.5]{xiao}  that
			\begin{equation}\label{eqr2}
		\Delta_{k, D}^{\prime}-\Delta_{k, D-1}^{\prime} \leq \frac{(p-1)(D-1)+\theta_n}{2} +2+\gamma+\lfloor\log_p D \rfloor,
	\end{equation}
	where  $\gamma$ is the maximal $p$-adic valuation of the integers in $[\eta_n-\frac{p+1}{2}(D-1), \eta_n+\theta_n+\frac{p+1}{2}(D-1)]$.

By \cite[(5.2.5)]{xiao},  
we have  $ \eta_n-\frac{p+1}{2}(D-1)> 0$ and $\eta_n+\theta_n+\frac{p+1}{2}(D-1)\leq pk_\bullet$. These two inequalities together imply that
\begin{equation}\label{eqr1}
	\gamma\leq \log_p (pk_\bullet).
\end{equation} 

On the other hand, combining  $\OD_{k,D-1}'-\OD_{k,D-1}=0$ for $D=1$ with  \cite[Lemma~5.8]{xiao}, we have 
\begin{equation}\label{eqw3}
	\OD_{k,D-1}'-\OD_{k,D-1}\leq 3(\log_p D)^2.
\end{equation}

Together with $\OD_{k,D}=\OD'_{k,D}$, \eqref{eqr2} and \eqref{eqr1}, this inequality implies
 	\begin{equation}\label{eqr3}
 \Delta_{k, D}-\Delta_{k, D-1} \leq \frac{(p-1)(D-1)+\theta_n}{2}+2+ \log_p (pk_\bullet)+\lfloor\log_p D \rfloor+ 3(\log_p D)^2.
 \end{equation}
Combining \eqref{eqr3} with Lemma~\ref{lemma3}, we have
 \begin{align*}
 	&\frac{k-2}{2}-( \Delta_{k, D}-\Delta_{k, D-1})\\\geq&
 \frac{k-2}{2}-	\frac{(p-1)(D-1)+\theta_n}{2}-\left(2+ \log_p (pk_\bullet)+\lfloor\log_p D \rfloor+ 3(\log_p D)^2\right)\\
 	\geq &\frac{p-1}{p+1}k_{\bullet}-2-\left(2+ \log_p (pk_\bullet)+\lfloor\log_p D\rfloor+ 3(\log_p D)^2\right):=f(D,k_\bullet,p).
 \end{align*}
 Note that by Lemma~\ref{lem:8}(1), we have $$D=\frac{1}{2}d_k^\Iw-n\leq \frac{1}{2}d_k^\Iw\leq k_\bullet+1.$$
 For any fixed $k_\bullet\geq 1$ and $D$ such that $D\leq k_\bullet+1$, the function $f(D,k_\bullet,p)$ is a decreasing function of $p\geq 11$. Hence, 
  \begin{align*}
 \frac{k-2}{2}-( \Delta_{k, D}-\Delta_{k, D-1})\geq& \tfrac{5}{6}k_{\bullet}-5- \log_{11} (k_\bullet)-\lfloor\log_{11} D \rfloor- 3(\log_{11} D)^2\\
\geq & \tfrac{5}{6}k_{\bullet}-5- \log_{11} (k_\bullet)-\lfloor\log_{11} (k_\bullet+1) \rfloor- 3(\log_{11} (k_\bullet+1))^2.
 \end{align*}
Under the condition $k_\bullet\geq 1$,  a numerical calculation from Matlab (see Figure~\ref{aaa}) shows that 
\begin{equation}\label{eqt2} 
	\frac{k-2}{2}-( \Delta_{k, D}-\Delta_{k, D-1})\geq \begin{cases}
		-4.417 &\textrm{for~}k_\bullet=1, 2;\\
		-3.961 &\textrm{for~}k_\bullet\geq 3.
	\end{cases}
\end{equation}

\begin{figure}[h!]
	\centering
		\includegraphics[width=\linewidth]{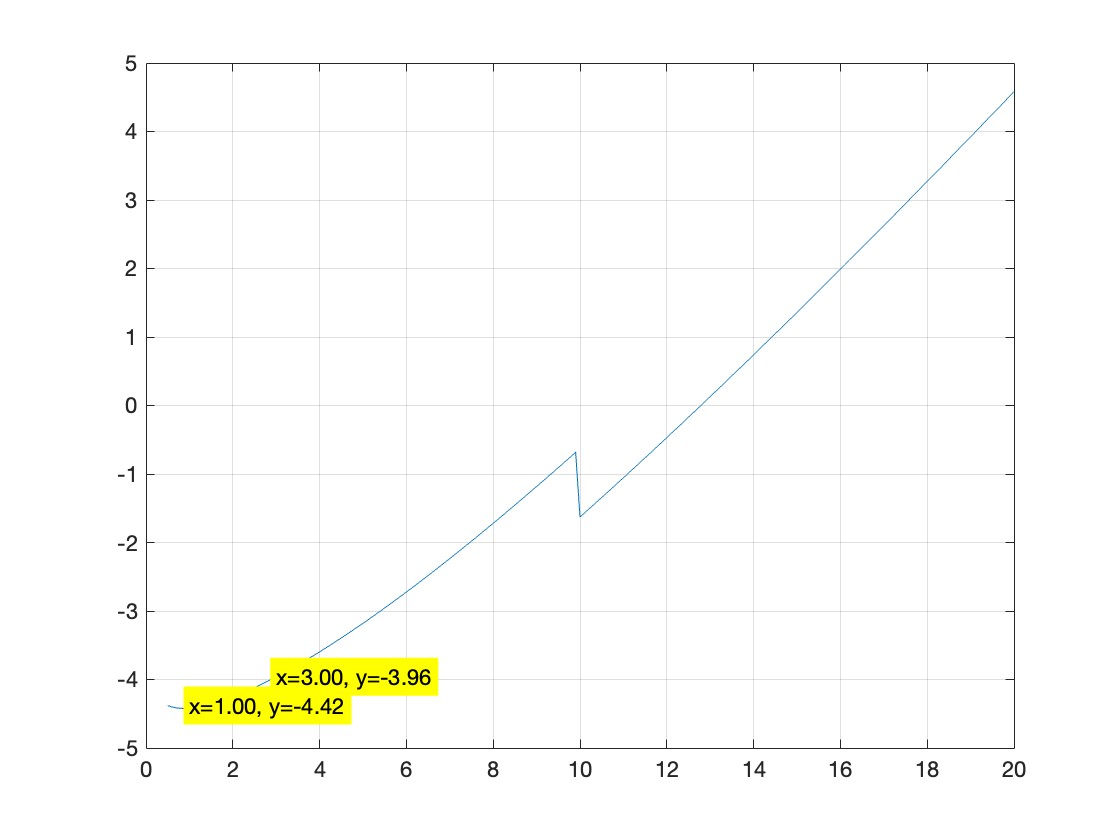}
		\caption{ The graph for $y=\tfrac{5}{6}x-5- \log_{11} (x)-\lfloor\log_{11} (x+1) \rfloor- 3(\log_{11} (x+1))^2$}
		\label{aaa}
	\end{figure}

Note that for $k_\bullet=1$ or $2$, by \cite[Lemmas~5.2 and 5.6]{xiao}, we have  $$\Delta_{k, D}-\Delta_{k, D-1}=\Delta'_{k, D}-\Delta'_{k, D-1}, $$
and hence $\frac{k-2}{2}-( \Delta_{k, D}-\Delta_{k, D-1})\in \frac{1}{2}\ZZ.$ Combining this with \eqref{eqt2}, we complete the proof.
\end{proof}

\begin{lemma}\label{lem:2}
	For any $k\in \calK$ such that $d_{k}^\new\geq 1$ and $w_\star\in \bfm_{\CC_p}$, if 
	$$\v(w_k-w_\star)\geq \OD_{k,\frac12d_{k}^\new}-\OD_{k,\frac12d_{k}^\new-1},$$
	then \begin{enumerate}
		\item $\nS_{w_\star,k}= (d^\ur_{k}, d^\Iw_{k}-d^\ur_{k})$.
		\item For any $k'(\neq k)\in \calK$ if  $\nS_{w_\star,k}\subseteq (d^\ur_{k'}, d^\Iw_{k'}-d^\ur_{k'})$, then
		$\v(k-k')=0$.
		\item The interval $\nS_{w_\star,k}$ is a maximal near-Steinberg range.
		\item 	$d^\ur_k$ and $d^\Iw_k-d^\ur_k$ are $x$-coordinates of vertices at the end of a line segment with slope $\frac{k-2}{2}$ in $\NP(G(w_{\star},-)).$
	\end{enumerate}
\end{lemma}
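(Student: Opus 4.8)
The plan is to take the four assertions in order: (1)--(3) are combinatorial consequences of the convexity of $\OD_k$, Lemma~\ref{lem:8}, and the nestedness of near-Steinberg ranges, while (4) follows from Proposition~\ref{xiao:pro}(2) once the slope is computed via the symmetry \eqref{neq::19}. For (1): because $\OD_k$ is a lower convex hull, the consecutive differences $\OD_{k,\ell}-\OD_{k,\ell-1}$ are non-decreasing in $\ell$, so the hypothesis is exactly the statement that the largest admissible value $\ell=\tfrac12 d_k^\new$ (an integer $\geq 1$, since $d_k^\new$ is even and $\geq 1$) satisfies the inequality defining $L_{w_\star,k}$. Hence $L_{w_\star,k}=\tfrac12 d_k^\new$, and using $d_k^\new=d_k^\Iw-2d_k^\ur$ one reads off $\nS_{w_\star,k}=(\tfrac12 d_k^\Iw-\tfrac12 d_k^\new,\tfrac12 d_k^\Iw+\tfrac12 d_k^\new)=(d_k^\ur,d_k^\Iw-d_k^\ur)$.

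For (2): by (1) (and $d_k^\new\geq 1$, so the interval is nonempty), the inclusion $\nS_{w_\star,k}\subseteq(d_{k'}^\ur,d_{k'}^\Iw-d_{k'}^\ur)$ amounts to $d_{k'}^\ur\leq d_k^\ur$ and $d_k^\Iw-d_k^\ur\leq d_{k'}^\Iw-d_{k'}^\ur$; if $k<k'$ Lemma~\ref{lem:8}(2) forces $d_k^\ur=d_{k'}^\ur$, and if $k'<k$ it forces $d_k^\Iw-d_k^\ur=d_{k'}^\Iw-d_{k'}^\ur$, so in both cases Lemma~\ref{lem:8}(3) gives $\v(k-k')=0$. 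For (3): if $\nS_{w_\star,k}$ were not maximal there would be $k'\neq k$ with $\nS_{w_\star,k}\subsetneq\nS_{w_\star,k'}\subseteq(d_{k'}^\ur,d_{k'}^\Iw-d_{k'}^\ur)$, so (2) gives $\v(k-k')=0$ and hence $\v(w_k-w_{k'})=1$. On the other hand, as in the proof of Lemma~\ref{lem:1} one has $\OD_{k,1}-\OD_{k,0}\geq\tfrac32$, so by monotonicity $\v(w_k-w_\star)\geq\OD_{k,\frac12 d_k^\new}-\OD_{k,\frac12 d_k^\new-1}\geq\tfrac32>1$, and the ultrametric inequality forces $\v(w_\star-w_{k'})=1$; this contradicts $L_{w_\star,k'}\geq 1$, which would give $\v(w_\star-w_{k'})\geq\OD_{k',1}-\OD_{k',0}\geq\tfrac32$. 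Hence $\nS_{w_\star,k}$ is maximal.

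For (4): by (1) and (3) the interval $(d_k^\ur,d_k^\Iw-d_k^\ur)$ is a maximal near-Steinberg range, so Proposition~\ref{xiao:pro}(2) shows that $d_k^\ur$ and $d_k^\Iw-d_k^\ur$ are the $x$-coordinates of the two endpoints of a face of $\NP(G(w_\star,-))$; its length is $d_k^\Iw-2d_k^\ur=d_k^\new$. To identify the slope I would show $\v(g_{d_k^\Iw-d_k^\ur}(w_\star))-\v(g_{d_k^\ur}(w_\star))=\tfrac{k-2}{2}d_k^\new$. The polynomials $g_{d_k^\ur}$ and $g_{d_k^\Iw-d_k^\ur}$ are coprime to $(w-w_k)$ (their $(w-w_k)$-exponents $m_{d_k^\ur}(k)$ and $m_{d_k^\Iw-d_k^\ur}(k)$ vanish), so by the hypothesis on $\v(w_k-w_\star)$ and Proposition~\ref{xiao:pro}(1) --- which pins down which other $w_{k'}$ can be closer to $w_\star$ than $w_k$ is --- their valuations at $w_\star$ agree with those at $w_k$; evaluating \eqref{neq::19} at $\ell=\tfrac12 d_k^\new$, where $g_{\bullet,\hat k}$ coincides with $g_\bullet$, gives $\v(g_{d_k^\Iw-d_k^\ur}(w_k))-\v(g_{d_k^\ur}(w_k))=\tfrac{k-2}{2}d_k^\new$. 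Dividing by $d_k^\new$ gives slope $\tfrac{k-2}{2}$.

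I expect the one genuinely delicate point to be the valuation comparison in (4): verifying that moving the evaluation from $w_k$ to $w_\star$ does not change $\v(g_{d_k^\ur})$ or $\v(g_{d_k^\Iw-d_k^\ur})$, which requires carefully tracking the contributions of nearby weights $k'$ via Proposition~\ref{xiao:pro}(1) (equivalently, citing the corresponding comparison from \cite[\S5]{xiao}). Everything else reduces to Lemma~\ref{lem:8}, the convexity of $\OD_k$, and the ultrametric inequality.
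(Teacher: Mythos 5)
Parts (1)--(3) of your argument are correct and follow the paper's route; your two-case treatment of (2) (forcing $d_k^\ur=d_{k'}^\ur$ when $k<k'$ and $d_k^\Iw-d_k^\ur=d_{k'}^\Iw-d_{k'}^\ur$ when $k'<k$, then applying Lemma~\ref{lem:8}(3)) is if anything a little cleaner than the paper's wording, and your proof of (3) via the ultrametric inequality and the bound $\OD_{k',1}-\OD_{k',0}\geq\tfrac32$ is exactly the intended use of Lemma~\ref{lem:1}. The genuine gap is in (4): you correctly reduce the slope identification to showing that $\v(g_{d_k^\ur,\hat k}(w_\star))=\v(g_{d_k^\ur,\hat k}(w_k))$ and $\v(g_{d_k^\Iw-d_k^\ur,\hat k}(w_\star))=\v(g_{d_k^\Iw-d_k^\ur,\hat k}(w_k))$, after which \eqref{neq::19} at $\ell=\pm\tfrac12 d_k^\new$ gives slope $\tfrac{k-2}{2}$, but you only announce this comparison as ``the delicate point'' instead of proving it. That comparison is the actual content of the paper's proof of (4): a priori there could be a weight $k'$ with $m_n^{(\ve)}(k')\geq 1$ and $\v(w_k-w_{k'})\geq\v(w_k-w_\star)$, and for such a factor the ultrametric inequality no longer identifies $\v(w_\star-w_{k'})$ with $\v(w_k-w_{k'})$, so something must rule it out.

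The step closes with exactly the ingredients you already assembled. Fix $n\in\{d_k^\ur,\,d_k^\Iw-d_k^\ur\}$ and $k'\neq k$ with $m_n^{(\ve)}(k')\geq 1$. If $\v(w_k-w_{k'})<\v(w_k-w_\star)$, the factor $(w-w_{k'})$ has the same valuation at $w_k$ and $w_\star$. Otherwise $\v(w_{k'}-w_k)\geq\v(w_k-w_\star)\geq\OD_{k,L_{w_\star,k}}-\OD_{k,L_{w_\star,k}-1}$ (using $L_{w_\star,k}=\tfrac12 d_k^\new$ from your part (1)), so Proposition~\ref{xiao:pro}(1) excludes $d_{k'}^\ur$ and $d_{k'}^\Iw-d_{k'}^\ur$ from $\nS_{w_\star,k}$; since $m_n^{(\ve)}(k')\geq 1$ means $d_{k'}^\ur<n<d_{k'}^\Iw-d_{k'}^\ur$ and $n$ is an endpoint of $\nS_{w_\star,k}=(d_k^\ur,d_k^\Iw-d_k^\ur)$, these exclusions force $\nS_{w_\star,k}\subseteq(d_{k'}^\ur,d_{k'}^\Iw-d_{k'}^\ur)$. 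Your part (2) then gives $\v(w_k-w_{k'})=\v(k-k')+1=1$, contradicting $\v(w_k-w_\star)\geq\tfrac32$ (the bound from \cite[Lemmas~5.2 and 5.6]{xiao} that you already used in (3)). Hence no such $k'$ exists, every factor of $g_{n,\hat k}$ has equal valuation at $w_k$ and $w_\star$, and the slope computation goes through; with this paragraph supplied, your argument coincides with the paper's proof.
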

\begin{proof}
	(1) It follows directly from the definition of near-Steinberg range. 
	
	(2)
	From (1) and our assumption $\nS_{w_\star,k}\subseteq (d^\ur_{k'}, d^\Iw_{k'}-d^\ur_{k'})$, we have \begin{equation}\label{eq:2}
		d^\ur_{k}\geq d^\ur_{k'}\quad\textrm{and}\quad 
		d^\Iw_k-d^\ur_{k}\leq d^\Iw_{k'}-d^\ur_{k'}.
	\end{equation} 
	The first inequality above combined with the first one of  Lemma~\ref{lem:8}(2) and our assumption $k'\neq k$ implies $k' <k$. Together with the  second inequality of  Lemma~\ref{lem:8}(2), this implies that 	$$d^\Iw_k-d^\ur_{k}\geq d^\Iw_{k'}-d^\ur_{k'}.$$
	Combining it with \eqref{eq:2}, we have $$d^\Iw_k-d^\ur_{k}= d^\Iw_{k'}-d^\ur_{k'}.$$
	Therefore, by Lemma~\ref{lem:8}(3), we have
	$\v(k-k')=0.$
	
	(3) Suppose that it is false. Then there is $k'\in \calK$
	such that
	$\nS_{w_\star,k}\subsetneqq \nS_{w_\star,k'}\subseteq(d^\ur_{k'}, d^\Iw_{k}-d^\ur_{k'})$. By (2) and Lemma~\ref{lem:1}, this implies $L_{w_\star,k'}=0$, which clearly is impossible. 
	
	(4) By \cite[Lemma~5.15]{xiao}, it is enough to show that $$g_{d^\ur_k,\hat k}(w_k)=g_{d^\ur_k,\hat k}(w_\star) \quad\textrm{and}\quad g_{d^\Iw_k-d^\ur_k,\hat k}(w_k)=g_{d^\Iw_k-d^\ur_k,\hat k}(w_\star).$$
	By symmetry, we will just prove the first one. For $k'\in \calK\backslash\{k\}$ such that $m_{d^\Iw_k}(k')\geq 1$ and
	$\v(w_k-w_{k'})<\v(w_k-w_{\star})$, we have 	$$\v(w_k-w_{k'})=\v(w_{\star}-w_{k'}).$$
	Hence, we just need to study $k'\in \calK\backslash\{k\}$ such that $m_{d^\Iw_k}(k')\geq 1$ and
	$\v(w_k-w_{k'})\geq \v(w_k-w_{\star})$.  By \cite[Proposition~5.16(1)]{xiao}, we have 
	 $\nS_{w_\star,k}\subseteq (d^\ur_{k'}, d^\Iw_{k'}-d^\ur_{k'})$, and hence by (2), 	$\v(w_k-w_{k'})=\v(k-k')+1=1$.
	 This is a contradiction to the following chain of inequality from \cite[Lemmas~5.2 and 5.6]{xiao} 
	\[	\v(w_k-w_{k'})\geq \v(w_k-w_{\star})\geq \OD_{k,\frac12d_{k}^\new}-\OD_{k,\frac12d_{k}^\new-1}\geq \frac{3}{2}.\qedhere\]
\end{proof}

We now prove our main theorem assuming the following proposition, whose proof is given in the next section.
\begin{proposition}\label{prop1}
	Let $k_0\in \calS$ such that 
	\begin{equation}\label{eqt:2}
		n_0:=d_{k_0}^\ur \geq d_{k}^\Iw-d_{k}^\ur \textrm{~for any~}  k\in \calS \textrm{~with~} k<k_0.
	\end{equation} For any $\tk\in \calS$, if $\NP(G(w_{\tk},-))$ contains a line segment $\overline{PQ}$ with endpoints $n_0$ and $n_0+L$ for some $L\geq 1$, then $\overline{PQ}$ has slope 	$\geq \min\{m-4, \frac{k_0-2}{2}\}$. 
\end{proposition}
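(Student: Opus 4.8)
The plan is to analyze the line segment $\overline{PQ}$ via its association with a maximal near-Steinberg range (Proposition~\ref{xiao:pro}(2)), and then split into two cases according to whether the relevant ``source weight'' of that range is $k_0$ or some other weight in $\calS$ or $\calK$. By Proposition~\ref{xiao:pro}(2), since $n_0$ and $n_0+L$ are $x$-coordinates of consecutive vertices of $\NP(G(w_{\tk},-))$, there is a weight $k^\ast\in\calK$ with $\nS_{w_{\tk},k^\ast}$ maximal and equal to $(n_0,n_0+L)$. In particular $d^\ur_{k^\ast}=n_0$ and $d^\Iw_{k^\ast}-d^\ur_{k^\ast}=n_0+L$, so $d^\new_{k^\ast}=L\geq 1$; writing $L=2L_{w_{\tk},k^\ast}$, the defining inequality of $L_{w_\star,k}$ gives $\v(w_{\tk}-w_{k^\ast})\geq \OD_{k^\ast,\frac12 d^\new_{k^\ast}}-\OD_{k^\ast,\frac12 d^\new_{k^\ast}-1}$. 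Hence Lemma~\ref{lem:2}(4) applies: the segment $\overline{PQ}$ has slope exactly $\frac{k^\ast-2}{2}$. So it suffices to bound $\frac{k^\ast-2}{2}$ from below by $\min\{m-4,\frac{k_0-2}{2}\}$.

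First I would handle the case $k^\ast\geq k_0$, which is immediate: then $\frac{k^\ast-2}{2}\geq\frac{k_0-2}{2}\geq\min\{m-4,\frac{k_0-2}{2}\}$ and we are done. So assume $k^\ast<k_0$. The idea here is to play $d^\ur_{k^\ast}=n_0$ against the hypothesis \eqref{eqt:2}. First, $k^\ast$ need not lie in $\calS$, but the condition \eqref{eqt:2} only constrains elements of $\calS$; nonetheless, since $k_0\in\calS$ and $\calS\subseteq\calK$ with $k^\ast<k_0$, I would use Lemma~\ref{lem:8}(2) applied to the pair $k^\ast<k_0$ together with $d^\ur_{k^\ast}=n_0=d^\ur_{k_0}$ or a comparison argument. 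Actually the cleaner route: by \eqref{eqt:2} and $d^\ur_{k^\ast}=n_0$, if $k^\ast\in\calS$ then $n_0=d^\ur_{k^\ast}\leq d^\Iw_{k^\ast}-d^\ur_{k^\ast}=n_0+L$ is automatic, so I instead want to locate an element $k'\in\calS$ with $k'<k_0$ forcing $d^\Iw_{k'}-d^\ur_{k'}$ large, and compare with $n_0$; the point of \eqref{eqt:2} is that $n_0=d^\ur_{k_0}$ is at least as large as all of those. Concretely, I expect to use that $k^\ast$ lies between two consecutive elements of $\calS$, apply Lemma~\ref{lem:8}(2) to relate $d^\ur_{k^\ast}, d^\Iw_{k^\ast}-d^\ur_{k^\ast}$ to the corresponding quantities of those $\calS$-neighbors, and deduce via \eqref{eqt:2} that $k_0$ must itself be the relevant weight or that $k^\ast$ and $k_0$ are congruent enough mod powers of $p$. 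Then Lemma~\ref{lem:8}(3) or Lemma~\ref{lem:1} forces $\v(k^\ast-k_0)=0$, i.e. $p^m\nmid k^\ast-k_0$, whereas $k^\ast<k_0$ and both... — here I would invoke the definition of $\calS$ (elements congruent mod $(p-1)p^m$) together with bounds on how far apart $k^\ast$ and $k_0$ can be, to conclude $k^\ast$ is ``close'' to $k_0$, and then the near-Steinberg slope $\frac{k^\ast-2}{2}$ differs from $\frac{k_0-2}{2}$ by a controlled amount, or to conclude $k^\ast\geq \ko$ and use Proposition~\ref{lemma2}.

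The decisive ingredient will be Proposition~\ref{lemma2}, which says $\frac{k^\ast-2}{2}\geq \OD_{k^\ast,\frac12 d^\new_{k^\ast}}-\OD_{k^\ast,\frac12 d^\new_{k^\ast}-1}-4$. Combined with the fact (from the near-Steinberg description, since $\overline{PQ}$ is the maximal range attached to $k^\ast$ relative to $w_{\tk}$) that $\v(w_{\tk}-w_{k^\ast})\geq \OD_{k^\ast,\frac12 d^\new_{k^\ast}}-\OD_{k^\ast,\frac12 d^\new_{k^\ast}-1}$, and that $\v(w_{\tk}-w_{k^\ast})=\v(\tk-k^\ast)+1$, I get $\frac{k^\ast-2}{2}\geq \v(\tk-k^\ast)-3$. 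Now $\tk,k_0\in\calS$ so $\v(\tk-k_0)\geq m$; if additionally $\v(k^\ast-k_0)\geq m$ then $\v(\tk-k^\ast)\geq m$ and we get slope $\geq m-3>m-4$. Otherwise $\v(k^\ast-k_0)<m$, and I will use the argument of the previous paragraph (Lemma~\ref{lem:8}(3) applied to $k^\ast$ and $k_0$, using $d^\ur_{k^\ast}=n_0=d^\ur_{k_0}$ which holds because \eqref{eqt:2} pins $n_0=d^\ur_{k_0}$ and both are at least $d^\Iw-d^\ur$ of smaller weights, forcing $d^\ur_{k^\ast}=d^\ur_{k_0}$) to conclude $\v(k^\ast-k_0)=0$, hence $\v(\tk-k^\ast)=0$ as well, which combined with $k^\ast<k_0$ and the structure of $\calS$ bounds $k_0-k^\ast$ and ultimately yields $\frac{k^\ast-2}{2}\geq\frac{k_0-2}{2}-O(1)$ — but since we need the clean bound $\frac{k_0-2}{2}$, I anticipate the actual resolution is that this last sub-case cannot occur (a weight $k^\ast<k_0$ with $d^\ur_{k^\ast}=n_0$ and $\nS$ maximal would contradict \eqref{eqt:2} or Proposition~\ref{xiao:pro}(1) exclusions). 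The main obstacle, and where I would spend the most care, is precisely this bookkeeping: showing that the ``bad'' weight $k^\ast<k_0$ producing the segment either coincides with a translate of $k_0$ by a high power of $p$ (giving slope $\geq m-3$ through the valuation bound and Proposition~\ref{lemma2}) or is excluded outright by \eqref{eqt:2} together with the exclusion principle in Proposition~\ref{xiao:pro}(1) and the monotonicity in Lemma~\ref{lem:8}(2).
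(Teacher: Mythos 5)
Your proposal rests on an identification that is not justified and is false in general: from Proposition~\ref{xiao:pro}(2) you only get a weight $k^\ast$ with $\nS_{w_{\tk},k^\ast}$ maximal and equal to $(n_0,n_0+L)$, which means $\tfrac12 d^{\Iw}_{k^\ast}-L_{w_{\tk},k^\ast}=n_0$ with $L_{w_{\tk},k^\ast}=L/2$; it does \emph{not} mean $L_{w_{\tk},k^\ast}=\tfrac12 d^{\new}_{k^\ast}$, so in general $n_0\neq d^{\ur}_{k^\ast}$ and $L\neq d^{\new}_{k^\ast}$. Consequently the inequality $\v(w_{\tk}-w_{k^\ast})\geq \OD_{k^\ast,\frac12 d^{\new}_{k^\ast}}-\OD_{k^\ast,\frac12 d^{\new}_{k^\ast}-1}$ (the hypothesis of Lemma~\ref{lem:2} and the input you feed into Proposition~\ref{lemma2}) is not available, and your central claim that $\overline{PQ}$ has slope exactly $\frac{k^\ast-2}{2}$ fails. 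In the paper this clean conclusion is only drawn in the sub-case $k'=k_0$ of $L\geq 2$; when $k'\neq k_0$ the slope equals $\frac{k'-2}{2}$ \emph{plus} a correction term $\bT$ coming from the change of valuations $\v(w_k-w_{\tk})-\v(w_k-w_{k'})$ over the sets $\calK^{\pm}$, and bounding $\bT$ (via Lemma~\ref{lem:6}, the key facts $k_0\in\calK^+$ and $k\leq k_0$ for $k\in\calK'$, and the hypothesis \eqref{eqt:2}) is the bulk of the proof and the source of the constant $-4$ (the bound $m-4.37$ improved to $m-4$ by half-integrality). Your anticipated dichotomy — either $\v(k^\ast-k_0)\geq m$ (giving $m-3$) or the sub-case is outright impossible — does not hold: the case $k'\neq k_0$ with $\v(k'-\tk)<m$ genuinely occurs and must be estimated, not excluded.

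A second gap is the case $L=1$, which your uniform treatment cannot reach: near-Steinberg ranges have even width $2L_{w_\star,k}\geq 2$, so no $k^\ast$ with $\nS_{w_{\tk},k^\ast}=(n_0,n_0+1)$ exists, and Proposition~\ref{xiao:pro}(2) gives you nothing there. The paper handles $L=1$ separately by directly estimating $\v(g_{n_0+1}(w_{\tk}))-\v(g_{n_0}(w_{\tk}))$ from the product formula for $g_n$, using \eqref{eqt:2}, Lemma~\ref{lem:8} and Lemma~\ref{lem:6}. Finally, even where your outline points in a workable direction, it remains programmatic (``I expect'', ``I anticipate''): the quantitative bookkeeping that constitutes the actual proof — the sums over the ranges of $k_\bullet$ permitted by Lemma~\ref{lem:8}(1), the $p$-adic valuation bounds, and the numerical estimates at $p=11$ — is not carried out, so the argument as written does not establish the stated lower bound $\min\{m-4,\frac{k_0-2}{2}\}$.
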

\begin{proof}[Proof of Theorem~\ref{thm1}]
	Choose a weight $\tk\in\calS$ arbitrarily.  
	It is enough to prove that there exists some integer $n_0$ such that 
	\begin{enumerate}[label=(\alph*)]
		\item the Newton polygon of $G(w_{\tk},-)$ has vertex at $i=n_0$.
		\item The Newton polygon of $G(w_{\tk},-)$ agrees with  $G(w_{\ko},-)$ up to the index $i=n_0$.
		\item The line segment in $\NP(G(w_{\tk},-))$ with left endpoint $n_0$ has slope $\geq m-4$.
	\end{enumerate}
	Note first that from the assumption on $\tk$ and construction of $k_0^{\min }$, we have \begin{equation}\label{eqr6}
		v_p(k-\tk)=v_p(k-k_0^{\min }) \textrm{~for any~} k\in \calK\backslash\{\ko\} \textrm{~with~} k<\kO. 
	\end{equation}

	We divide our discussion into two scenarios.
	
	\noindent \textbf{Case 1:} $\frac{\ko-2}{2}\geq m-4$. 
	
	In this case, we take $n_0:=d_{k_0^{\min }}^{\mathrm{ur}}$.
	Now we prove (a). Suppose  that $n_0=d_{k_0^{\min }}^{\mathrm{ur}}$ is not a vertex by contradiction, i.e. the index $i=n_0$ would be part of a segment $(n_1, n_2)$ with $n_1<n_0<n_2$ over which $\NP\left(G\left(w_{\tk}, -\right)\right)$ is a straight line.  By Proposition~\ref{xiao:pro}(2), $(n_1, n_2)$ is a maximal near-Steinberg range, say $(n_1, n_2)=\nS_{w_{\tk}, k'}$ for some $k'\in \calK$. By definition, we have $d_{k'}^{\mathrm{ur}}\leq n_1<d_{k_0^{\min }}^{\mathrm{ur}}$, and hence $k'<k_0^{\min }$. Combined with \eqref{eqr6}, this strict inequality implies that
	$$\v(k'-k_0^\omin)<m\leq \v(\tk-k_0^\omin),$$
	a contradiction to Proposition~\ref{xiao:pro}(1). This proves (a).
	
Combining (a) with \eqref{eqr6}, we complete the proof of (b). Since $k_0^{\min }$ is the minimal weight in $\calS$, the condition \eqref{eqt:2} in Proposition~\ref{prop1} is trivial. Hence, combining  Proposition~\ref{prop1} and our assumption $\frac{\ko-2}{2}\geq m-4$ in this case, we complete the proof of (c). 
	
	\noindent \textbf{Case 2:} $\frac{\ko-2}{2}< m-4$. 
	In this case, we take $n_0:=d_{\kO}^{\ur}$, and first prove	\begin{equation}\label{eqt:3}
		d_{\ko}^{\mathrm{ur}}+d_{\ko}^{\new}\leq d_{\kO}^{\mathrm{ur}},
\end{equation} that is the condition \eqref{eqt:2} for $k_0^{\max}$ in Proposition~\ref{prop1}.
If $d_{\ko}^{\new}=0$, then by Lemma~\ref{lem:8}, we have 	$d_{\ko}^{\ur}\leq d_{\kO}^{\mathrm{ur}}.$
This proves \eqref{eqt:3} for this case. Now we assume that $d_{\ko}^{\new}\geq 1$. 
	By Proposition~\ref{lemma2}, we have
	$$ \OD_{\ko,\frac12d_{\ko}^\new}-\OD_{\ko,\frac12d_{\ko}^\new-1}\leq \frac{\ko-2}{2}+4< m.$$
Combining this inequality with $$\v(w_{\tk}-w_{\ko})=\v(\tk-\ko)+1\geq m+1$$ and Lemma~\ref{lem:2}, we obtain that $\NP\left(G\left(w_{\tk}, -\right)\right)$ contains a line segment with endpoints $d_{\ko}^{\ur}, d_{\ko}^{\mathrm{ur}}+d_{\ko}^{\new}$ and of slope $\frac{\ko-2}{2}$. On the other hand,  in this case,  we have
	\begin{equation*}
		d_{\ko}^{\mathrm{ur}}+d_{\ko}^{\new}\leq  d_{\ko}^{\Iw}\leq 2\kob+2\leq \frac{2\ko}{p-1}+2.
	\end{equation*}
	Recall that by	Notation~\ref{notation:2}(2), we have $k^{\max}_{0\bullet}\geq p^m.$
Applying  the condition $\frac{\ko-2}{2}< m-4$, our assumption $m\geq 4$ and \eqref{eqw1} consecutively, we have
	$$\frac{2\ko}{p-1}+2\leq  \frac{4m}{p-1}+2\leq \frac{2p^{m}}{p+1}-2\leq  \frac{2k^{\max}_{0\bullet}}{p+1}-2\leq d_{\kO}^{\mathrm{ur}}.$$
	This completes the proof of \eqref{eqt:3}.

	By \eqref{eqr6} and a similar argument to Case 1, we complete the proof of (a)(b) for this case. 
	Combining 
	$\frac{\kO-2}{2}\geq p^m\geq m$ and \eqref{eqt:3} with Proposition~\ref{prop1}, we complete the proof of (c) for this case. 
\end{proof}

%
\section{Proof of Proposition~\ref{prop1}}\label{s4}

Throughout this section, let $n_0$ and $L$ be defined as in Proposition~\ref{prop1}, and recall that $p\geq 11$.
The proof of Proposition~\ref{prop1} is divided into two cases $L=1$ and $L\geq 2$, and each case is further broken into sub-cases. 
We first introduce a tool lemma for ease of use later (since the proof is not difficult).
\begin{lemma}\label{lem:6}
	For any integers $n_1<n_2$,
choose $r\in \RR_{\geq 0}$ such that $\v(n)\leq r$ for every $n_1+1\leq n\leq n_2$, then  $$\frac{n_2-n_1}{p}-1\leq \sum_{n=n_1+1}^{n_2} \v(n)\leq \frac{n_2-n_1}{p-1}+r.$$
\end{lemma}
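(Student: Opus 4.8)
The plan is a standard double-counting argument, so the proof will be short. Writing $\v(n)=\#\{j\ge 1 : p^j\mid n\}$ and exchanging the order of summation gives
\[
\sum_{n=n_1+1}^{n_2}\v(n)=\sum_{j\ge 1}N_j,\qquad N_j:=\#\{\,n : n_1+1\le n\le n_2,\ p^j\mid n\,\}=\Big\lfloor\tfrac{n_2}{p^j}\Big\rfloor-\Big\lfloor\tfrac{n_1}{p^j}\Big\rfloor .
\]
The elementary estimate $x-y-1<\lfloor x\rfloor-\lfloor y\rfloor<x-y+1$, applied with $x=n_2/p^j$ and $y=n_1/p^j$, yields the two-sided bound $\frac{n_2-n_1}{p^j}-1< N_j<\frac{n_2-n_1}{p^j}+1$ for every $j\ge 1$.

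For the upper bound I would first observe that, since each $n$ in the range satisfies $\v(n)\le r$ and $\v(n)\in\ZZ$, in fact $\v(n)\le\lfloor r\rfloor$, so no $n$ in the range is divisible by $p^{\lfloor r\rfloor+1}$; hence $N_j=0$ for all $j>\lfloor r\rfloor$. Consequently
\[
\sum_{n=n_1+1}^{n_2}\v(n)=\sum_{j=1}^{\lfloor r\rfloor}N_j\ \le\ \sum_{j=1}^{\lfloor r\rfloor}\Big(\tfrac{n_2-n_1}{p^j}+1\Big)\ \le\ (n_2-n_1)\sum_{j\ge 1}p^{-j}+\lfloor r\rfloor\ =\ \tfrac{n_2-n_1}{p-1}+\lfloor r\rfloor\ \le\ \tfrac{n_2-n_1}{p-1}+r .
\]

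For the lower bound it suffices to keep only the $j=1$ term: every multiple of $p$ in $\{n_1+1,\dots,n_2\}$ contributes at least $1$ to $\sum\v(n)$ and all other summands are nonnegative, so $\sum_{n=n_1+1}^{n_2}\v(n)\ge N_1\ge\frac{n_2-n_1}{p}-1$, which is exactly the left-hand inequality.

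There is essentially no obstacle here; the only points that need (minor) care are the truncation of the $j$-sum at $j=\lfloor r\rfloor$, which uses that $\v$ is integer-valued, and the uniform floor estimate for $N_j$. I would present the four steps in the order above: (i) the double-counting identity; (ii) the two-sided bound on $N_j$; (iii) truncation plus the geometric-series estimate for the upper bound; (iv) the single-term estimate for the lower bound.
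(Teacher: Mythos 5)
Your proof is correct and follows essentially the same route as the paper: both rest on the Legendre-type identity $\sum_{n=n_1+1}^{n_2}\v(n)=\sum_{j\geq 1}\bigl(\lfloor n_2/p^j\rfloor-\lfloor n_1/p^j\rfloor\bigr)$, truncate the sum at $j=\lfloor r\rfloor$ via the hypothesis $\v(n)\leq r$, bound the remaining terms by a geometric series for the upper inequality, and keep only the $j=1$ term for the lower one.
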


\begin{proof}
	The hypothesis in this lemma implies that
	$$	\left\lfloor\frac{n_2}{p^i}\right\rfloor- \left\lfloor\frac{n_1}{p^i}\right\rfloor=0	\quad\textrm{for every~} i\geq  \lfloor r\rfloor+1.$$
	Hence,
	$$\sum_{n=n_1+1}^{n_2} \v(n) =\sum_{i=1}^\infty\left\lfloor\frac{n_2}{p^i}\right\rfloor-\sum_{i=1}^\infty\left\lfloor\frac{n_1}{p^i}\right\rfloor\leq \sum_{i=1}^{\lfloor r\rfloor}\left\lfloor\frac{n_2-n_1}{p^i}\right\rfloor+\lfloor r\rfloor
	\leq \frac{n_2-n_1}{p-1}+r,$$
	which proves the second inequality.
	The first inequality follows from  
	\[	\left\lfloor\frac{n_2}{p}\right\rfloor- \left\lfloor\frac{n_1}{p}\right\rfloor\geq 	\left\lfloor\frac{n_2-n_1}{p}\right\rfloor\geq \frac{n_2-n_1}{p}-1.\qedhere\]
\end{proof}

\begin{proof}[Proof of Proposition~\ref{prop1} for the case $L=1$]
	Recall that $n_0:=d_{k_0}^\ur$. 
	Note that in this case, the slope of $\overline{PQ}$
is equal to 	$\v(g_{n_0+1}(w_{\tk}))-\v(g_{n_0}(w_{\tk}))\in \ZZ$.
It is enough to prove 
	$$\v(g_{n_0+1}(w_{\tk}))-\v(g_{n_0}(w_{\tk}))> m-5.$$
Clearly, 
%
	\begin{equation}\label{eq:27}
		\begin{split}
	&	\v(g_{n_0+1}(w_{\tk}))-\v(g_{n_0}(w_{\tk}))
				\\=	&	\left(\v(k_0-\tk)+1\right)+\sum_{
				\substack{k\in \calK\backslash\{k_0\}\\d^\ur_k\leq n_0 < \frac{1}{2}d^\Iw_k}}
			\left(\v(k-\tk)+1\right)-\sum_{
				\substack{k\in \calK\\\frac{1}{2}d^\Iw_k\leq n_0 \leq d^\Iw_k-d^\ur_k -1}}
			\left(\v(k-\tk)+1\right)
		\end{split}
	\end{equation}
	We first prove that  any weight $k\neq k_0$ that is showed in the two sums of \eqref{eq:27} satisfies
 \begin{equation}\label{eq:22}
	\v(k-\tk)< m.
\end{equation}
Let $k$ be any such weight. We note that $d^\ur_k\leq n_0.$
If $k_\bullet>k_{0\bullet},$ then $d^\ur_k= n_0$, and by Lemma~\ref{lem:8}(3), 
 $\v(k-k_0)=0< m$.
If $k_\bullet<k_{0\bullet},$ from our assumption on $k_0$ in this proposition, we have $k\notin \calS$, and hence
$\v(k-k_0)< m$. Combined with $\v(\tk-k_0)\geq m$, these two inequalities above complete the proof of 
\eqref{eq:22}. 

	We now estimate the second sum, and take $k$ arbitrarily from this sum, which clearly is strictly less than $k_0$. Applying Lemma~\ref{lem:8}(1) on	$$\frac{1}{2}d^\Iw_k\leq n_0 \leq d^\Iw_k-d^\ur_k -1\leq d^\Iw_k-1,$$ we have
	\begin{equation}\label{eq:23}
	\left\lceil\frac{n_0-1}{2}\right\rceil\leq k_\bullet\leq n_0.
	\end{equation}
	Combined with \eqref{eq:22}, $\v(k_{0}-\tk)\geq m$ and $\v(k'-k'')=\v(k'_\bullet-k''_\bullet)$ for every $k',k''\in \calK$,
	we have
	\begin{equation}\label{eq6}
		\v(k_{\bullet}-\tk_{\bullet})=\v(k_{\bullet}-k_{0\bullet})\leq 
	\log_p \left(k_{0\bullet}-k_{\bullet}\right)\leq 
	\log_p \left(k_{0\bullet}-\frac{n_0-1}{2}\right).
	\end{equation}
By \eqref{eqw1} again, we have
	$k_{0\bullet}\leq (p+1)(\frac{n_0}{2}+1)$. 
	Plugging  this inequality into \eqref{eq6}, we have
	\begin{equation*}
		\v(k_{\bullet}-k_{0\bullet})\leq \log_p\left(\frac{n_0p}{2}+p+\frac{3}{2}\right).
	\end{equation*}
	Combined with \eqref{eq:23} and  Lemma~\ref{lem:6} with $r=\log_p\left(\frac{n_0p}{2}+p+\tfrac32\right)$,  this inequality shows that
	\begin{multline}\label{eq:7}
		  \sum_{
			\substack{k\in \calK\\\frac{1}{2}d^\Iw_k\leq n_0 \leq d^\Iw_k-d^\ur_k -1}}
		\left(\v(k-\tk)+1\right)\\
		\begin{aligned}
			 	\leq &
			\sum_{\lceil\frac{n_0-1}{2}\rceil\leq k_\bullet\leq n_0}
			\left(1+\v(k_{\bullet}-k_{0\bullet})\right)
			\\\leq& 
			n_0-\left\lceil\frac{n_0-1}{2}\right\rceil+1+\frac{n_0-\lceil\frac{n_0-1}{2}\rceil+1}{p-1}+\log_p\left(\tfrac{n_0p}{2}+p+\tfrac{3}{2}\right)\\
		\leq  &\frac{p(n_0+3)}{2(p-1)} +\log_p\left(\tfrac{n_0p}{2}+p+\tfrac{3}{2}\right).
		\end{aligned}
	\end{multline}

	We divide our discussion into two scenarios.
	
		\noindent \textbf{Case 1:}	$0\leq n_0\leq 3$. By \eqref{eq:7}, we have
	\begin{align*}
		&\v(g_{n_0+1}(w_{\tk}))-\v(g_{n_0}(w_{\tk}))\\
		\geq &
		\left(\v(k_0-\tk)+1\right)-\sum_{
			\substack{k\in \calK\\
				\frac{1}{2}d^\Iw_k\leq n_0 \leq d^\Iw_k-d^\ur_k -1}}
		\left(\v(k-\tk)+1\right)
		\\
		\geq &	\left(\v(k_0-\tk)+1\right)-\left(\frac{p(n_0+3)}{2(p-1)} +\log_p\left(\tfrac{n_0p}{2}+p+\tfrac32\right)\right)
		\\	\geq& m-\tfrac{3p}{p-1}-1	
		\\	\geq& m-4.3,	\end{align*}
	where the last two inequalities are from our assumption $p\geq 11$.
	
	\noindent \textbf{Case 2:} $n_0\geq 4$. Note that if $n_0< k_\bullet< (p+1)\left(\frac{n_0}{2}-1\right),$ then by Lemma~\ref{lem:8}(1), we have
	$$n_0<k_\bullet\leq \frac{1}{2}d_k^\Iw\quad\textrm{and}\quad d^\ur_k\leq\frac{2k_\bullet}{p+1}+2<	n_0,$$
	i.e. $d^\ur_k<n_0<\frac{1}{2}d_k^\Iw$.
	Combined with Lemma~\ref{lem:6} and the fact that $p\geq 11$ is odd, this implies
	\begin{multline}\label{eq:4}
		\sum_{
			\substack{k\in \calK\\d^\ur_k< n_0 < \frac{1}{2}d^\Iw_k}}
		\left(\v(k-\tk)+1\right)\\
		\begin{aligned}
			\geq &	\sum_{n_0< k_\bullet< (p+1)\left(\tfrac{n_0}{2}-1\right)}
			\left(\v(k_{\bullet}-\tk_{\bullet})+1\right)
			\\
			\geq&
			\frac{(p+1)\left(\frac{n_0}{2}-1\right)-n_0-1}{p}-1+
				(p+1)\left(\tfrac{n_0}{2}-1\right)-n_0-1\\
			=&
			(p+1)	\frac{(p+1)\left(\frac{n_0}{2}-1\right)-n_0-1}{p}-1\\
			=&\frac{(p^2-1)n_0}{2p}-\frac{(p+1)(p+2)}{p}-1.
		\end{aligned}
	\end{multline}
		Plugging \eqref{eq:4} and \eqref{eq:7} consecutively into \eqref{eq:27}, we obtain
	\begin{multline*}
		\v(g_{n_0+1}(w_{\tk}))-\v(g_{n_0}(w_{\tk}))
		\geq \v(k_{0}-\tk)+\frac{(p^2-1)n_0}{2p}-\frac{(p+1)(p+2)}{p}
		\\-\left(\frac{p(n_0+3)}{2(p-1)}+\log_p\left(\frac{n_0p}{2}+p+\tfrac32\right)\right).
	\end{multline*}
	From our assumption $p\geq 11$ and $n_0\geq 4$, the term 
	$$	\frac{(p^2-1)n_0}{2p}-\left(\frac{p(n_0+3)}{2(p-1)}+\log_p\left(\frac{n_0p}{2}+p+\tfrac32\right)\right)$$
	reaches its minimum at $n_0=4$, and hence
	\begin{align*}
		&	 \frac{(p^2-1)n_0}{2p}-\frac{(p+1)(p+2)}{p}
		-\left(\frac{p(n_0+3)}{2(p-1)}+\log_p\left(\frac{n_0p}{2}+p+\tfrac32\right)\right)\\
		\geq& 	 \frac{2(p^2-1)}{p}-\frac{(p+1)(p+2)}{p}
		-\left(\frac{7p}{2(p-1)}+\log_p\left(3p+\tfrac32\right)\right)
		\\	=&\frac{p^2-3p-4}{p}-\frac{7p}{2(p-1)}-\log_p(3p+\tfrac{3}{2})\\
		= &
		\frac{2p^3-15p^2-2p+8}{2p(p-1)}-\log_p(3p+\tfrac{3}{2}).
	\end{align*}
	Note that the last term in this chain of inequality reaches its minimum at $p=11$ with value $> 2.3.$
	Hence, we have $\v(g_{n_0+1}(w_{\tk}))-\v(g_{n_0}(w_{\tk}))\geq m-4,$
	which completes the proof of this case.
\end{proof}

\begin{proof}[Proof of Proposition~\ref{prop1} for the case $L\geq 2$]
	By Proposition~\ref{xiao:pro}(2), 
	there exists $k'\in \calK$ such that
	\begin{enumerate}
		\item $\nS_{w_{\tk},k'}$ is a maximal near-Steinberg range.
		\item $\nS_{w_{\tk},k'}=(n_0, n_0+L)$. 
		\item $L_{w_{\tk}, k'}=\frac{L}{2}\geq 1$.
			\item The slope of $\overline{PQ}$
			is equal to 
			$ \frac{1}{L}\left(\v\left(g_{\frac{1}{2}(d_{k'}^{\Iw}+L),\widehat{k'}}(w_{\tk})\right)
			 	-\v\left(g_{\frac{1}{2}(d_{k'}^{\Iw}-L),\widehat{k'}}(w_{\tk})\right)\right).$
	\end{enumerate}  
Hence, to prove this proposition, it is enough to show that 
	$$ \frac{1}{L}\left(\v\left(g_{\frac{1}{2}(d_{k'}^{\Iw}+L),\widehat{k'}}(w_{\tk})\right)
-\v\left(g_{\frac{1}{2}(d_{k'}^{\Iw}-L),\widehat{k'}}(w_{\tk})\right)\right)\geq \min\{m-4,\tfrac{k_0-2}{2}\}.$$

\noindent \textbf{Case 1:} ${k'}=k_0$. Note that in this case, we have $\nS_{w_{\tk},k_0}= (d^\ur_{k_0}, d^\Iw_{k_0}-d^\ur_{k_0})$, 
 and hence
	$$\v(w_{k_0}-w_{\tk})\geq   \OD_{k_0,\frac12d_{k_0}^\new}-\OD_{k_0,\frac12d_{k_0}^\new-1}.$$
By Lemma~\ref{lem:2}(4), we complete the proof of this case.

 \noindent \textbf{Case 2:} $k'\neq k_0$.  
 Our strategy is to studying the difference $\bT$ between our goal term $$\dfrac{1}{L}\left(\v\left(g_{\frac{1}{2}(d_{k'}^{\Iw}+L),\widehat{k'}}(w_{\tk})\right)
 -\v\left(g_{\frac{1}{2}(d_{k'}^{\Iw}-L),\widehat{k'}}(w_{\tk})\right)\right)$$
 and
  \begin{equation*}
 	\frac{1}{L}\left(\v\left(g_{\frac{1}{2}(d_{k'}^{\Iw}+L),\widehat{k'}}(w_{k'})\right)
 	-\v\left(g_{\frac{1}{2}(d_{k'}^{\Iw}-L),\widehat{k'}}(w_{k'})\right)\right)=\frac{k'-2}{2},
 \end{equation*}
 where the above inequality is from \eqref{neq::19}. This benefits from the following reduction: for any $n\geq 0$, $$\v \left( g_{n,\widehat{k'}}(w_{\tk})\right)-\v \left(g_{n,\widehat{k'}}(w_{k'})\right)=\sum_{k\in \calK} m_n^{(\varepsilon)}(k) (\v(w_k-w_{\tk})-\v(w_k-w_{k'})).$$
Note that for $k\in \calK$ such that $\v(w_k-w_{k'})<\v(w_{\tk}-w_{k'})$, we have 
$$ \v(w_k-w_{k'})=\v(w_k-w_{\tk}).$$
Hence, $$\v \left( g_{n,\widehat{k'}}(w_{\tk})\right)-\v \left(g_{n,\widehat{k'}}(w_{k'})\right)=\sum_{\substack{k\in \calK\backslash\{k'\}\textrm{~s.t.~}\\ \v(w_k-w_{k'})\geq \v(w_{\tk}-w_{k'})}} m_n^{(\varepsilon)}(k) (\v(w_k-w_{\tk})-\v(w_k-w_{k'})).$$
Let $$\calK':=\left\{k\in \calK\backslash\{k'\}\;\Big|\; \v(w_k-w_{k'})\geq \v(w_{\tk}-w_{k'});  m_{\frac{1}{2}(d_{k'}^{\Iw}-L)}^{(\varepsilon)}(k)> 0 \textrm{~or~}m_{\frac{1}{2}(d_{k'}^{\Iw}-L)}^{(\varepsilon)}(k)> 0\right\}.$$
 Hence, $$\bT=\sum_{k\in \calK'} \left(m_{\frac{1}{2}(d_{k'}^{\Iw}+L)}^{(\varepsilon)}(k)-m_{\frac{1}{2}(d_{k'}^{\Iw}-L)}^{(\varepsilon)}(k)\right) (\v(w_k-w_{\tk})-\v(w_k-w_{k'})).$$
 Note that for $k\in \calK\backslash\{k'\}$ such that $m_{\frac{1}{2}(d_{k'}^{\Iw}-L)}^{(\varepsilon)}(k)> 0 \textrm{~or~}m_{\frac{1}{2}(d_{k'}^{\Iw}-L)}^{(\varepsilon)}(k)> 0$, we have
 $(d_{k}^\ur, d_{k}^\Iw-d_{k}^\ur)\cap \overline{\nS}_{w_{\tk},k'}\neq \emptyset$, where $\overline{\nS}_{w_{\tk},k'}$ is defined in Notation-Lemma~\ref{notation1}.
 Hence, by Proposition~\ref{xiao:pro}(1), each $k\in \calK'$ must satisfies either $$ \nS_{w_{\tk},k'} \subseteq (d_{k}^\ur, \tfrac{1}{2}d_{k}^\Iw)\quad \textrm{or}\quad \nS_{w_{\tk},k'}\subseteq (\tfrac{1}{2}d_{k}^\Iw, d_{k}^\Iw-d_{k}^\ur).$$ 
 This gives us $ \calK'=\calK^+ \sqcup\calK^-$, where $$\calK^+:=\{k\in \calK'\;|\; \v(w_{k}-w_{k'})\geq \v(w_{\tk}-w_{k'})\textrm{~and~}\nS_{w_{\tk},k'} \subseteq (d_{k}^\ur, \tfrac{1}{2}d_{k}^\Iw)\},$$ 
 $$\calK^-:=\{k\in \calK'\;|\; \v(w_{k}-w_{k'})\geq \v(w_{\tk}-w_{k'})\textrm{~and~} \nS_{w_{\tk},k'}\subseteq (\tfrac{1}{2}d_{k}^\Iw, d_{k}^\Iw-d_{k}^\ur)\}.$$
Note that 
$$m_{\frac{1}{2}(d_{k'}^{\Iw}+L)}^{(\varepsilon)}(k)-m_{\frac{1}{2}(d_{k'}^{\Iw}-L)}^{(\varepsilon)}(k)=\begin{cases}
	L&\textrm{for~}k\in \calK^+,\\
	-L&\textrm{for~}k\in \calK^-.
\end{cases}$$
Hence, we have
 \begin{equation*}
 	\begin{split}
 	\bT	=&\sum_{k\in \calK^+}(\v(w_{k}-w_{\tk})-\v(w_{k}-w_{k'}))-\sum_{k\in \calK^{-}}(\v(w_{k}-w_{\tk})-\v(w_{k}-w_{k'})),
 	\end{split}
 \end{equation*}
 that is equivalent to
  \begin{equation}\label{eq:17}
 	\begin{split}
 		&	\frac{1}{L}\left(\v\left(g_{\frac{1}{2}(d_{k'}^{\Iw}+L),\widehat{k'}}(w_{\tk})\right)
 		-\v\left(g_{\frac{1}{2}(d_{k'}^{\Iw}-L),\widehat{k'}}(w_{\tk})\right)\right)\\
 		=&
 		\frac{{k'}-2}{2}+\sum_{k\in \calK^+}(\v(w_{k}-w_{\tk})-\v(w_{k}-w_{k'}))-\sum_{k\in \calK^{-}}(\v(w_{k}-w_{\tk})-\v(w_{k}-w_{k'})).
 	\end{split}
 \end{equation}
We now give $	\frac{1}{L}\left(\v\left(g_{\frac{1}{2}(d_{k'}^{\Iw}+L),\widehat{k'}}(w_{\tk})\right)
-\v\left(g_{\frac{1}{2}(d_{k'}^{\Iw}-L),\widehat{k'}}(w_{\tk})\right)\right)$ a lower bound by estimating two summations above.
 
 We first prove that
\begin{equation}\label{eqt:5}
	k_0\in \calK^+. 
\end{equation}
Note that by \cite[Lemma~5.2]{xiao} and the definition of $\nS_{w_{\tk},k'}$, we have 
\begin{equation}\label{eqt:4}
	\v(k'-\tk)=\v(w_{k'}-w_{\tk})-1\geq 1,
\end{equation}
and hence $\v(k_0-{k'})>0$, since otherwise, we have
$\v(\tk-{k'})=0$. By Lemma~\ref{lem:1}, this implies that $L_{w_{\tk}, k'}=0$, a contradiction to the property (3) of $k'$.

We prove $k'<k_0$ as follows: since $n_0$ is a vertex of $\nS_{w_{\tk},k'}$, we must have 
$d^\ur_{k'}\leq n_0$. Hence, by Lemma~\ref{lem:8}(2), it is enough to show 
$d^\ur_{k'}\neq n_0.$ Suppose otherwise. By Lemma~\ref{lem:8}(3), we have
$\v(k_0-{k'})=0$, a contradiction to $\v(k_0-{k'})>0$. 

From $k'<k_0$ and $n_0< d_{k'}^\Iw-d_{k'}^\ur$, the given assumption on $k_0$ in this proposition implies
 $\v(k'-\tk)<m.$ As a consequence, we complete the proof of \eqref{eqt:5}. 

We now prove 	that	\begin{equation}\label{eqr12}
	k\leq k_0\textrm{~for every~}k\in \calK'.
\end{equation}
			 For every $k\in \calK'$, we have
		$\v(k-k')\geq \v(\tk-k')>0$, where the last inequality follows from \eqref{eqt:4}. Combined with  $\v(k_0-{k'})>0$, we have $\v(k-k_0)>0$. Note that $d_k^\ur\leq n_0$. By Lemma~\ref{lem:8}(3), if 
		$d_k^\ur= n_0$, then $\v(k-k_0)=0$, a contradiction. This completes the proof of \eqref{eqr12}.
	
			Combining \eqref{eqr12} with the assumption on $k_0$ in this proposition, we obtain that for every $ k\in \calK'\backslash\{k_0\}$, we have
	\begin{equation}\label{eq:9}
		\v(k-\tk)<m.
	\end{equation}
	Note that 
	\begin{equation}\label{eqt:6}
		v_p\left(w_{k_1}-w_{k_2}\right) \geq 1 \textrm{~for all~} k_1, k_2\in \calK.
	\end{equation}
	Combining this with \eqref{eqt:5}, we have
	\begin{equation}\label{eq:12}
		\sum_{k\in \calK^+}(\v(w_{k}-w_{\tk})-\v(w_{k}-w_{k'}))\geq \v(w_{k_0}-w_{\tk})-1+\sum_{k\in \calK^+}(1-\v(w_{k}-w_{k'})), 
	\end{equation}
and by \eqref{eqw1}, 
	\begin{equation}\label{eq:36}
		k_{0\bullet}\leq \left(\frac{n_0}{2}+1\right)(p+1).
	\end{equation} 
From $k'<k_0$ and \eqref{eqr12},  for every 
$k\in \calK'$ 
we have
	%
\begin{equation}\label{eq:29}
		\v(k_{\bullet}-k'_\bullet)\leq \log_p(|k_{\bullet}-k'_{\bullet}|)\leq \log_p(k_{0\bullet})\leq \log_p \left(\frac{n_0}{2}+1\right)+\log_p(p+1).
\end{equation}
If $k\in \calK^+$, by definition, we have
$
d_{k}^{\ur}	\leq  n_0\leq 	\frac{1}{2}	d_{k}^{\Iw}-1
$,
and hence by Lemma~\ref{lem:8}(1),
		$$ n_0\leq  k_\bullet\leq \left(\frac{n_0}{2}+1\right)(p+1).$$
	Combined with \eqref{eq:29} and Lemma~\ref{lem:6} with $r=\log_p \left(\frac{n_0}{2}+1\right)+\log_p(p+1)$, this restriction of $k_\bullet$ implies that
	\begin{align*}
		&	\sum_{k\in \calK^+}(1-\v(w_{k}-w_{k'}))\\
	\geq &-	\sum_{n_0\leq  k_\bullet\leq \left(\frac{n_0}{2}+1\right)(p+1)}\v(k_{\bullet}-k'_\bullet)
		\\
		\geq& 
		-\left(\frac{\left(\frac{n_0}{2}+1\right)(p+1)-n_0+1}{p-1}\right)-\log_p \left(\frac{n_0}{2}+1\right)-\log_p(p+1)\\
			\geq &-\frac{n_0}{2}-\frac{p+2}{p-1}-\log_p\left(\frac{n_0}{2}+1\right)-\log_p(p+1).
	\end{align*}

	Plugging it into \eqref{eq:12}, we have
	\begin{multline}\label{eq:13}
		\sum_{k\in \calK^+}(\v(w_{k}-w_{\tk})-\v(w_{k}-w_{k'}))\\
	\geq \v(w_{k_0}-w_{\tk})-\frac{n_0}{2}-\log_p\left(\frac{n_0}{2}+1\right)-\frac{2p+1}{p-1}-\log_p(p+1).
	\end{multline}
	On the other hand, by \eqref{eqt:6} again, we have
	\begin{align}\label{eq:16}
		\sum_{k\in \calK^-}(\v(w_{k}-w_{\tk})-\v(w_{k}-w_{k'}))
		\leq &\sum_{k\in \calK^-}(\v(w_{k}-w_{\tk})-1)\\
		=&\sum_{k\in \calK^-}\v(k-\tk).\notag 
	\end{align}
Note that 
by \eqref{eq:9},  we have
$\v(k-\tk)=\v(k-k_0)$, and hence
$$\sum_{k\in \calK^-}\v(k-\tk)=\sum_{k\in \calK^-}\v(k-k_0)=\sum_{k\in \calK^-}\v(k_\bullet-k_{0\bullet}).$$
Since every $k\in \calK^-$ satisfies $k<k_0$, we have
\begin{equation}\label{eq:35}
	\v(k-\tk)\leq \log_p (k_{0\bullet}-k_\bullet).
\end{equation}
On the other hand, for every $k\in \calK^-$,
by definition, we have
$$	\tfrac{1}{2}	d_{k}^{\Iw}+1\leq n_0\leq d_{k}^{\Iw}-d_{k}^{\ur}\leq d_{k}^{\Iw},
$$
and hence by Lemma~\ref{lem:8}(1) again,
$$ \frac{n_0}{2}-1\leq k_\bullet\leq n_0-1.$$
Combined with \eqref{eq:35}, 
	this restriction of $k_\bullet$ implies that \begin{equation}\label{eq:15}
		\sum_{k\in \calK^-}\v(k-\tk)\leq \sum_{\frac{n_0}{2}-1\leq k_\bullet\leq n_0}\v (k_{0\bullet}-k_\bullet).
	\end{equation}
	By \eqref{eq:36}, we estimate $\log_p (k_{0\bullet}-k'_\bullet)$ by
	$$\log_p (k_{0\bullet}-k'_\bullet)\leq \log_p\left(\left(\frac{n_0}{2}+1\right)(p+1)-\left(\frac{n_0}{2}-1\right)\right)=\log_p\left(\frac{pn_0}{2}+p+2\right).$$
	Combining it with Lemma~\ref{lem:6} with $r=\log_p\left(\frac{pn_0}{2}+p+2\right)$, we obtain
	$$\sum_{\frac{n_0}{2}-1\leq k_\bullet\leq n_0-1}\v (k_{0\bullet}-k_\bullet)\leq \frac{n_0-\frac{n_0}{2}+1}{p-1}+\log_p\left(\frac{pn_0}{2}+p+2\right).$$
	Plugging it into \eqref{eq:15} and then \eqref{eq:16}, we have
	$$	\sum_{k\in \calK^-}(\v(w_{k}-w_{\tk})-\v(w_{k}-w_{k'}))\leq \frac{n_0+2}{2(p-1)}+\log_p\left(\frac{pn_0}{2}+p+2\right).$$
	
	Plugging it with \eqref{eq:13} into \eqref{eq:17}, we prove
	\begin{multline}\label{eq:18}
		\frac{1}{L}\left(\v\left(g_{\frac{1}{2}(d_{k'}^{\Iw}+L),\widehat{k'}}(w_{\tk})\right)
	-\v\left(g_{\frac{1}{2}(d_{k'}^{\Iw}-L),\widehat{k'}}(w_{\tk})\right)\right)\\
	\begin{aligned}
		\geq 
	\frac{k'-2}{2}+\v(w_{k_0}-w_{\tk})-\frac{n_0}{2}-\log_p\left(\frac{n_0}{2}+1\right)-\frac{2p+1}{p-1}-\log_p(p+1)&
	\\-\left(\frac{n_0+2}{2(p-1)}+\log_p\left(\frac{pn_0}{2}+p+2\right)\right)&.
		\end{aligned}
	\end{multline}
	Note that $n_0\leq \frac12d^{\Iw}_{k'}$. By Lemma~\ref{lem:8}(1), we have $n_0-1 \leq k_{\bullet}^{\prime}=\frac{k^{\prime}-k_{\varepsilon}}{p-1}$, and hence
	$$
	k^{\prime}-2 \geq k_{\varepsilon}-2+(p-1)\left(n_0-1\right) \geq(p-1)\left(n_0-1\right).
	$$
	Combined with the trivial bound $k'\geq 2$, this inequality gives
	$$k'\geq \max\{(p-1)(n_0-1), 0\}+2.$$
	Plugging it into 
	\eqref{eq:18}, we obtain 
	\begin{multline*}
				\frac{1}{L}\left(\v\left(g_{\frac{1}{2}(d_{k'}^{\Iw}+L),\widehat{k'}}(w_{\tk})\right)
			-\v\left(g_{\frac{1}{2}(d_{k'}^{\Iw}-L),\widehat{k'}}(w_{\tk})\right)\right)\\
	\begin{aligned}
		\geq 
\frac{\max\{(p-1)(n_0-1), 0\}}{2}+m+1-\frac{n_0}{2}-\log_p\left(\frac{n_0}{2}+1\right)-\frac{2p+1}{p-1}&\\
-\log_p(p+1)-\left(\frac{n_0+2}{2(p-1)}+\log_p\left(\frac{pn_0}{2}+p+2\right)\right)&.
	\end{aligned}
	\end{multline*}
By calculation and our assumption $p\geq 11$, the term on the right hand side in the above equation reaches its minimum when $p=11$ and $n_0=1$, which is
\begin{align*}
&	m+\frac{1}{2}-\log_p(\tfrac{3}{2})-\frac{2p+1}{p-1}-\log_p(p+1)-\left(\frac{3}{2(p-1)}+\log_p\left(\frac{3p}{2}+2\right)\right)\\
=&m+\frac{1}{2}-\frac{4p+5}{2(p-1)}-\log_p(\tfrac{3}{2})-\log_p(p+1)-\log_p\left(\frac{3p}{2}+2\right)\\
\geq& m-4.37.
\end{align*}
Note that by \eqref{eq:17}, we have	$$\frac{1}{L}\left(\v\left(g_{\frac{1}{2}(d_{k'}^{\Iw}+L),\widehat{k'}}(w_{\tk})\right)
-\v\left(g_{\frac{1}{2}(d_{k'}^{\Iw}-L),\widehat{k'}}(w_{\tk})\right)\right)\in \frac{\ZZ}{2}.$$  
This implies that $$\frac{1}{L}\left(\v\left(g_{\frac{1}{2}(d_{k'}^{\Iw}+L),\widehat{k'}}(w_{\tk})\right)
-\v\left(g_{\frac{1}{2}(d_{k'}^{\Iw}-L),\widehat{k'}}(w_{\tk})\right)\right)\geq m-4, $$ and hence
completes the proof.
\end{proof}

\end{document}